\DeclareMathOperator{\Hom}{Hom}
\DeclareMathOperator{\Ext}{Ext}
\DeclareMathOperator{\Spec}{Spec}
\renewcommand{\P}{\mathbb{P}}
\renewcommand{\phi}{\varphi}
\newcommand{\id}{\mathrm{id}}
\newcommand{\F}{F}
\newcommand{\G}{G}
\newcommand{\Perv}{\mathrm{Perv}}
\newcommand{\Desc}{\mathrm{Desc}}
\newcommand{\Cons}{\mathrm{Cons}}
\DeclareMathOperator{\sheafhom}{\mathscr{H}\text{\kern -5.5pt {\calligra\large om}}\,}
\newenvironment{psmallmatrix}
  {\left(\begin{smallmatrix}}
  {\end{smallmatrix}\right)}
\newtheorem{Theorem}{Theorem}
\newtheorem{Proposition}[Theorem]{Proposition}
\newtheorem{Corollary}[Theorem]{Corollary}
\newtheorem{Definition}[Theorem]{Definition}
\newtheorem{Remark}[Theorem]{Remark}
\theoremstyle{definition}
\newtheorem{Example}[Theorem]{Example}
\newtheoremstyle{case}{}{}{}{}{}{:}{ }{}
\theoremstyle{case}
\newcommand{\C}{{\mathbb{C}}}
\newcommand{\K}{{K}}
\title{Perverse sheaves on smooth toric varieties and stacks}
\author{Sergey Guminov}
\affil{National Research University Higher School of Economics\\

Centre of Pure Mathematics, MIPT}
\numberwithin{Proposition}{section}
\numberwithin{Corollary}{section}
\numberwithin{Lemma}{section}
\numberwithin{Example}{section}
\numberwithin{Conjecture}{section}
\numberwithin{Question}{section}
\numberwithin{Remark}{section}
\numberwithin{Theorem}{section}
\numberwithin{Definition}{section}
\begin{document}

%\section{Introduction}
%\setcounter{Theorem}{0}
%\numberwithin{Proposition}{section}
%\numberwithin{Corollary}{section}
%\numberwithin{Lemma}{section}
%\numberwithin{Example}{section}
%\numberwithin{Conjecture}{section}
%\numberwithin{Problem}{section}
%\numberwithin{Question}{section}
%\numberwithin{Remark}{section}
%\numberwithin{Theorem}{section}

\maketitle
\begin{abstract}
    It is usually not straightforward to work with the category of perverse sheaves on a variety using only its definition as a heart of a $t$-structure. In this paper, the category of perverse sheaves on a smooth toric variety with its orbit stratification is described explicitly as a category of finite-dimensional modules over an algebra. An analogous result is also established for various categories of equivariant perverse sheaves, which in particular gives a description of perverse sheaves on toric orbifolds, and we also compare the derived category of the category of perverse sheaves to the derived category of constructible sheaves.
\end{abstract}

\footnotetext[0]{The research was funded by the Russian Science Foundation (project № 25-21-00358).}
\section{Introduction}
Perverse sheaves are one of the central objects of study in topology and geometry. There are multiple approaches to describing the category of perverse sheaves smooth along a fixed stratification of a complex variety (or a more general topological space). R. MacPherson and K. Vilonen explain how to compute this category by an inductive process, gluing the strata one by one \cite{MacPherson1986inventiones}, and, with S. Gelfand, how to express this category using microlocal data \cite{gelfand2005microlocal}; A. Beilinson's gluing theorem uses nearby and vanishing cycles to glue the whole category from perverse sheaves on a divisor and its complement \cite{Beilinson1987glue}. Still, the categories of perverse sheaves have only been described explicitly for a small number of families of stratified varieties. Among those are affine spaces stratified by hyperplane arrangements \cite{Kapranov2016hyperplane}, spaces of matrices stratified by the rank \cite{braden1999rank}, Grassmanians of types A and D with the Schubert stratification \cite{braden2002grassmanian}, affine spaces with the normal crossings stratification \cite{galligo1985mathcal} and, more generally, smooth toric varieties \cite{Dupont2010thesis}. 

This paper is dedicated to the study of the category of (equivariant) perverse sheaves and its derived category on a smooth toric variety. It is possible to explicitly describe the category of perverse sheaves as the category of finite-dimensional modules over an algebra which is a finite extension of the group algebra of the fundamental group of the torus. For a variety of dimension one this observation is due to A. Bondal and T. Logvinenko \cite{bondal_logvinenko}. The quiver description of this category was obtained by D. Dupont in her thesis \cite{Dupont2010thesis} by gluing the quiver descriptions of perverse sheaves on an affine cover by open subvarieties of the form $\mathbb{C}^n\times(\mathbb{C}^\times)^m$, the perverse sheaves on which were already described earlier in \cite{galligo1985mathcal}. Unfortunately, the description of the glued category turned out to be somewhat incorrect. Following the same general idea, we present a corrected version of this description. The language of monadic descent makes it possible to use the stacky nature of perverse sheaves while mostly avoiding 2-categorical difficulties.

The category of $G$-equivariant perverse sheaves, for $G$ a closed subgroup of the torus, is also described. Combined with the Cox construction, this also gives an alternative proof of the result for the non-equivariant sheaves mentioned earlier. We still present the argument using monadic descent because it is expected to generalize to the non-smooth case.

There is also the question of whether the derived category of the abelian category of perverse sheaves is equivalent to the derived category of constructible sheaves (i.e. complexes with constructible cohomology) under the realization functor. When the stratification is not fixed a priori, this is the content of Beilinson's theorem \cite{beilinson2006derived}. For a fixed stratification, we describe a class of stratifications $\Sigma$ for which the realization functor is an equivalence for the category of perverse sheaves constructible with respect to $\Sigma$. In particular, this class includes the orbit stratification of a normal toric variety. 

The author is a winner of the all-Russia mathematical August Moebius contest of graduate and undergraduate student papers and thanks the jury and the board for the high praise of his work. The author is also grateful to Alexey Bondal for extensive comments on an earlier draft of this work and to Sasha Beilinson for illuminating discussions.

\subsection{Preliminaries and notation}
Throughout the paper, the term "toric variety" means a normal complex toric variety with its analytic topology, and the word "cone" will mean a strongly convex rational polyhedral cone. Sometimes for a cone $\sigma$ we also let $\sigma$ denote the fan of all faces of $\sigma$. $\tau\prec\sigma$ will mean that $\tau$ is a face of $\sigma$.  We will be working exclusively with sheaves with values in $k$-vector spaces over some field $k$. By "perverse sheaf" we mean a middle perversity perverse sheaf, with the indexing convention such that on a smooth variety $X$ of dimension $n$ the constant sheaf $k_X$ shifted by $n$, i.e. the object $k_X[n]$, is perverse. When working with perverse sheaves on toric varieties, we will always assume the perverse sheaves to be constructible with respect to the orbit stratification. The category of perverse sheaves on a variety $X$ with a stratification $\Sigma$ is denoted by $\Perv(X)$ or $\Perv_\Sigma(X)$. Also, all the direct and inverse image functors will always be derived by default and denoted simply by, for example, $j_*$ instead of $Rj_*$. When working with fundamental groups, we will always assume that a base point is fixed and simply write $\pi_1(X)$.

For the general theory of perverse sheaves we refer to the original paper \cite{BBD1982} or the book \cite{achar2021perverse}.

%%%%%%%%%%%%%%%%%%%%%%%%%%%%%%%%%%%%%%%%%%%%%%%%%%%%%%%%%%%%%

\section{Perverse sheaves on a smooth toric variety}

In this section our goal is to understand the category of perverse sheaves on a smooth toric variety with its orbit stratification as a category of finite-dimensional modules over some algebra. The starting point is the case of the normal crossings stratification of an affine space $\mathbb{C}^n.$ The case $n=1$ is by now classic: the data of a perverse sheaf on $\mathbb{C}$ stratified by the origin and its complement is equivalent to the data of a pair of $k$-vector spaces
% https://q.uiver.app/#q=WzAsMixbMCwwLCJcXFBzaSJdLFsyLDAsIlxcUGhpIl0sWzAsMSwidSIsMCx7ImN1cnZlIjotMX1dLFsxLDAsInYiLDAseyJjdXJ2ZSI6LTF9XV0=
\[\begin{tikzcd}
	\Psi && \Phi
	\arrow["u", curve={height=-6pt}, from=1-1, to=1-3]
	\arrow["v", curve={height=-6pt}, from=1-3, to=1-1]
\end{tikzcd}\] such that $\id_\Psi-vu$ and $\id_\Phi-uv$ are invertible (in fact, it is sufficient to require this for only one of these maps) \cite{verdier1985extension,Beilinson1987glue}. The equivalence between the two categories is given by mapping a perverse sheaf $F$ to the diagram % https://q.uiver.app/#q=WzAsMyxbMCwwLCJcXFBzaShGKSJdLFsyLDAsIlxcUGhpKEYpIl0sWzEsMF0sWzAsMSwiXFxtYXRocm17Y2FufSIsMCx7ImN1cnZlIjotMX1dLFsxLDAsIlxcbWF0aHJte3Zhcn0iLDAseyJjdXJ2ZSI6LTF9XV0=
\[\begin{tikzcd}
	{\Psi(F)} & {} & {\Phi(F)}
	\arrow["{\mathrm{can}}", curve={height=-6pt}, from=1-1, to=1-3]
	\arrow["{\mathrm{var}}", curve={height=-6pt}, from=1-3, to=1-1]
\end{tikzcd}\] where $\Psi$ and $\Phi$ denote the nearby and vanishing cycles functors (possibly shifted, depending on conventions) and $\mathrm{var},\ \mathrm{can}$ are the natural maps between them. In this case $\Psi(F)$ with the monodromy operator $\id_{\Psi(F)}-\mathrm{var}\circ \mathrm{can}$ may be viewed as a local system on $\mathbb{C}^\times$ and then it will coincide with the local system $F|_{\C^\times}[-1].$ 

The case $n>1$ was resolved by Maisonobe, Galligo and Granger \cite{galligo1985mathcal}. In their description, a perverse sheaf on $\C^n$ is identified with a collection of $2^n$ vector spaces $V_i$, $i\in \{0,1\}^n$, one for each stratum, and maps between them. If we think of the spaces as placed in the vertices of an $n$-dimensional hypercube, then there is a pair of maps $u_e$ and $v_e$ in opposite directions at each edge $e$, and these maps are such that all squares commute, the maps $\id-u_e\circ v_e$ and $\id-v_e\circ u_e$ are invertible, and all such automorphisms commute with each other. 

A smooth toric variety is glued from affine charts of the form $\C^n\times (\C^\times)^m.$ The category of perverse sheaves on smooth toric varieties was described by Dupont \cite{Dupont2010thesis} by gluing the descriptions of the above form, using the fact that perverse sheaves satisfy smooth descent. For the case of $\mathbb{P}^2$ considered as a smooth toric variety, the category of perverse sheaves is described as the category of diagrams of the form 
% https://q.uiver.app/#q=WzAsNyxbNCwwLCJWXzEiXSxbNCw0LCJWIl0sWzQsOCwiVl97MjN9Il0sWzgsNiwiVl8yIl0sWzgsMiwiVl97MTJ9Il0sWzAsMiwiVl97MTN9Il0sWzAsNiwiVl8zIl0sWzEsMCwidV8xIiwwLHsiY3VydmUiOi0yfV0sWzAsMSwidl8xIiwwLHsiY3VydmUiOi0yfV0sWzAsNCwidV97MTJ9IiwwLHsiY3VydmUiOi0yfV0sWzQsMCwidl97MTJ9IiwwLHsiY3VydmUiOi0yfV0sWzQsMywidl97MjF9IiwwLHsiY3VydmUiOi0yfV0sWzMsNCwidV97MjF9IiwwLHsiY3VydmUiOi0yfV0sWzEsMywidV8yIiwwLHsiY3VydmUiOi0yfV0sWzMsMSwidl8yIiwwLHsiY3VydmUiOi0yfV0sWzEsNiwidV8zIiwwLHsiY3VydmUiOi0yfV0sWzMsMiwidV97MjN9IiwwLHsiY3VydmUiOi0yfV0sWzIsMywidl97MjN9IiwwLHsiY3VydmUiOi0yfV0sWzIsNiwidl97MzJ9IiwwLHsiY3VydmUiOi0yfV0sWzYsNSwidV97MzF9IiwwLHsiY3VydmUiOi0yfV0sWzUsNiwidl97MzF9IiwwLHsiY3VydmUiOi0yfV0sWzUsMCwidl97MTN9IiwwLHsiY3VydmUiOi0yfV0sWzAsNSwidV97MTN9IiwwLHsiY3VydmUiOi0yfV0sWzYsMiwidV97MzJ9IiwwLHsiY3VydmUiOi0yfV0sWzYsMSwidl8zIiwyLHsiY3VydmUiOi0yfV1d
\[\begin{tikzcd}[sep=scriptsize]
	&&&& {V_1} \\
	\\
	{V_{13}} &&&&&&&& {V_{12}} \\
	\\
	&&&& V \\
	\\
	{V_3} &&&&&&&& {V_2} \\
	\\
	&&&& {V_{23}}
	\arrow["{u_1}", curve={height=-12pt}, from=5-5, to=1-5]
	\arrow["{v_1}", curve={height=-12pt}, from=1-5, to=5-5]
	\arrow["{u_{12}}", curve={height=-12pt}, from=1-5, to=3-9]
	\arrow["{v_{12}}", curve={height=-12pt}, from=3-9, to=1-5]
	\arrow["{v_{21}}", curve={height=-12pt}, from=3-9, to=7-9]
	\arrow["{u_{21}}", curve={height=-12pt}, from=7-9, to=3-9]
	\arrow["{u_2}", curve={height=-12pt}, from=5-5, to=7-9]
	\arrow["{v_2}", curve={height=-12pt}, from=7-9, to=5-5]
	\arrow["{u_3}", curve={height=-12pt}, from=5-5, to=7-1]
	\arrow["{u_{23}}", curve={height=-12pt}, from=7-9, to=9-5]
	\arrow["{v_{23}}", curve={height=-12pt}, from=9-5, to=7-9]
	\arrow["{v_{32}}", curve={height=-12pt}, from=9-5, to=7-1]
	\arrow["{u_{31}}", curve={height=-12pt}, from=7-1, to=3-1]
	\arrow["{v_{31}}", curve={height=-12pt}, from=3-1, to=7-1]
	\arrow["{v_{13}}", curve={height=-12pt}, from=3-1, to=1-5]
	\arrow["{u_{13}}", curve={height=-12pt}, from=1-5, to=3-1]
	\arrow["{u_{32}}", curve={height=-12pt}, from=7-1, to=9-5]
	\arrow["{v_3}"', curve={height=-12pt}, from=7-1, to=5-5]
\end{tikzcd}\] such that all squares commute, the mappings $M_i=v_iu_i+\id$, $M_{ij}=v_{ij}u_{ij}+\id$, $N_i=u_iv_i+\id$, $N_{ij}=u_{ij}v_{ij}+\id$ are invertible and commute (and the $+$ sign instead of $-$ is, of course, a matter of convention only), and such that $M_3=M_1^{-1}M_2^{-1}$ and $M_{ij}=M_{ik}^{-1}$ for all $(i,j,k)\in\{1,2,3\}^3.$ 

A closer look reveals a contradiction. Using the above relations, we find

$$v_1=v_1M_{13}M_{12}=M_3M_2v_1=M_1^{-1}v_1.$$ A diagram as above is obtained by gluing the 3 square diagrams corresponding to the three charts isomorphic to $\C^2$, and the restriction functor to such a chart corresponds to taking a square subdiagram. The relation $v_1=M_1^{-1}v_1$ deduced above implies that the functor of taking the subdiagram spanned by $V,\ V_1,\ V_2,\ V_{12}$ is not essentially surjective. However, the embedding $j: {\mathbb{C}}^2\hookrightarrow\P^2$ is affine, so both $j_!$ and $j_*$ must preserve perverse sheaves, and hence every $F\in\Perv(\mathbb{C}^2)$ is isomorphic to $j^*j_* F$ and lies in the image of $j^*$. 

To get the correct relations, one should think of the strata of the toric stratification as given by intersecting the torus-invariant divisors $D_i$. Given a diagram of vector spaces as above, consider an adjacent pair of vector spaces $V,\ W$ and the maps $u,\ v$ between them. Such a pair corresponds to a pair of adjacent strata $S$ and $S'$, i.e. such that $S'$ is of codimension one in the closure $\overline{S}$. Then $S'$ is locally cut out in $\overline{S}$ by intersecting with some divisor $D_i$, and the invertible operators $M=vu+\id$, $N=uv+\id$ should be thought of as monodromy operators corresponding to a loop in the open orbit $T$ of the toric variety which go around $D_i$ once. The relations between the various operators then will follow from the relations between such loops in $\pi_1(T).$ In particular, in the case of $\mathbb{P}^2$ described above the relations $M_{ij}=M_{ik}^{-1}$ must be replaced with $M_{ij}=M_{ik}^{-1}N_i^{-1}.$ This way of thinking about these relations seems to have already been known to some experts in mirror symmetry \cite{gammage2023homological}.

In this section, our goal is to prove the corrected version of the above description of the category of perverse sheaves on smooth toric varieties. One distinction is that instead of gluing diagrams of vector spaces, we first identify perverse sheaves on affine varieties with an action of a torus $T$ with modules over algebras which are finite extensions of $k[\pi_1(T)]$, and then glue these categories of modules into a category of modules over an even larger algebra, explicitly defined by the fan of the toric variety. 

Note that the description of the category of equivariant perverse sheaves presented in the next section will also give a proof for the nonequivariant case through the use of the Cox construction. Nevertheless, we choose to present the "stacky" proof anyway. One reason for that is that it is likely that if one could somehow identify $\Perv(X)$ on a non-smooth affine toric variety $X$ acted on by a torus $T$ with the category of modules over some extension of $k[\pi_1(T)]$, then the argument presented in this section would help with gluing these local descriptions into a description of the category $\Perv(X)$ for an arbitrary toric $X$.

Fix an algebraic torus $T=(\mathbb{C}^\times)^n.$ Consider its cocharacter lattice $N$ (also known as the lattice of one-parameter subgroups) and a regular fan $\Delta\subset N\otimes \mathbb{Q}$ consisting of $q$ cones, $m$ of which are one-dimensional, which corresponds to a smooth toric $n$-dimensional variety $X_\Delta$. The torus $T$, which we may now identify with $N\otimes {\mathbb{C}}^\times$, acts on $X_\Delta$. The fundamental group $\pi_1(T)$ is naturally identified with the lattice $N$ by the isomorphism $\lambda: N\to \pi_1(T)$ taking a morphism $\mathbb{C}^\times \to T$ to its restriction to $S^1\subset \mathbb{C}^\times$. Any ray of the fan $\Delta$ is generated by an integral element $v_i\in N,$ where $i=1,\ldots,m$, and we denote the corresponding loop in $\pi_1(T)$ by $\lambda_i=\lambda(v_i).$

Consider the algebra of matrices $\mathrm{Mat}_q(k[N])$ of size $q\times q$. We will be indexing the rows and columns of a matrix by the cones of the fan and let $M_{\sigma,\tau}$ denote the matrix element of $M$ in row $\sigma$ and column $\tau$ for $\sigma,\tau$ -- some cones of $\Delta$. Finally, define $$A(\Delta)=\{M\in\mathrm{Mat}_q(k[N])\ |\ M_{\sigma,\tau} \text{ is divisible by }\prod_{i: v_i\in\sigma\setminus\tau}(1-v_i)\quad \forall \sigma,\tau\}.$$ It is easy to see that this is indeed an algebra and that its center consists precisely of the scalar matrices. 

There is a more geometric definition of this algebra. Consider the dual torus $T^\vee=\Spec k[N]$ and a trivial bundle $E$ of rank $q$ on it with the basis of the space of sections $\varepsilon_\sigma$, indexed by the cones $\sigma$ of the fan $\Delta$. Then $A(\Delta)$ is the algebra of endomorphisms of $E$ that preserve over $V(1-v_i)$ the subbundles generated by $\varepsilon_\sigma$ such that $v_i\notin\sigma.$ 

It is also of note that since $k[\pi_1(T)]$ is naturally identified with the center of $A(\Delta)$, and hence $A(\Delta)$ may be considered as a coherent sheaf on $T^\vee$. The fiber of $A(\Delta)$ over a general point is just a matrix algebra. The fibers over the points of the divisors $V(1-v_i)$ are more interesting. These divisors are important because it will turn out that the fiber of a module over $A(\Delta)$ which corresponds to a perverse sheaf $F$ at a point of a divisor $V(1-v_i)$ is nontrivial precisely when the loop $\lambda_i$ acts on the local system $F|_T$ in a unipotent way, and it is precisely such local systems that have nontrivial extensions to a perverse sheaf along $D_i$.

For explicit computations, we assume that we have already chosen a basis $\varepsilon_1,\ldots,\varepsilon_n$ of $N$ and hence have $T^\vee=(\C^\times)^n=\Spec k[t_1^{\pm 1},\ldots,t_n^{\pm 1}]$, with $t_i=\lambda(\varepsilon_i).$
\begin{Example}\label{Example_C1}

The category $\Perv(\mathbb{C})$ of perverse sheaves on $\mathbb{C}$ stratified by 0 and its complement is equivalent to the category of diagrams % % https://q.uiver.app/#q=WzAsMixbMCwwLCIxIl0sWzIsMCwiMiJdLFsxLDAsInYiLDAseyJjdXJ2ZSI6LTN9XSxbMCwxLCJ1IiwwLHsiY3VydmUiOi0zfV1d
\[\begin{tikzcd}
	\Psi && \Phi
	\arrow["u", curve={height=-6pt}, from=1-1, to=1-3]
	\arrow["v", curve={height=-6pt}, from=1-3, to=1-1]
\end{tikzcd}\] such that $\id_\Psi-vu$ and $\id_\Phi-uv$ are invertible. As long as we forget about the invertibilty conditions for the time being, these diagrams are just representations of a quiver $Q$ with two arrows and two vertices. Consider the path algebra $kQ$ with the path of length zero labeled $e_1,e_2$. Then the invertibilty conditions above are equivalent to the invertibility of the algebra element $s=1+vu+uv$. This element is central, so the localization $kQ_s$ of $kQ$ with respect to $s$ may be defined as usual by inverting $s$, and in fact, we get an equivalence between $\Perv(\C^1)$ and $kQ_s\text{-}\mathrm{mod^{fd}}$. Moreover, if under this equivalence a perverse sheaf $F$ corresponds to a $kQ_s$-module $M$, then the restriction of $F$ to the open stratum is the local system corresponding to the $e_1kQ_se_1\cong k[t,t^{-1}]$-module $e_1M$, and the action of $t=\id_1-vu$ corresponds to going around the loop around the origin.

Let $\Delta$ be the fan in $\mathbb{Q}^1$ consisting of 0 and the cone generated by $1$. Then the algebra $kQ_s$ is isomorphic to the algebra $A(\Delta)=\{\begin{psmallmatrix}a & b\\(1-t)c & d\end{psmallmatrix}\in\mathrm{Mat}_2(k[t,t^{-1}])\ |\ a,b,c,d\in k[t,t^{-1}] \}$ by the isomorphism defined on generators by the rule

$$e_1\mapsto \begin{psmallmatrix}1 & 0\\0 & 0\end{psmallmatrix},\ e_2\mapsto \begin{psmallmatrix}0 & 0\\0 & 1\end{psmallmatrix},\ u\mapsto \begin{psmallmatrix}0 & 0\\ 1-t & 0\end{psmallmatrix},\  v\mapsto \begin{psmallmatrix}0 & 1\\ 0 & 0\end{psmallmatrix},$$ so the idempotent $e_1$ corresponds to the zero cone and $e_2$ corresponds to the ray. Geometrically speaking, $A(\Delta)$ is the algebra of endomorphisms of a trivial rank $2$ bundle over $\C^\times$ preserving a line in the fiber over $1$. The description of the category $\Perv(\mathbb{C})$ as a category of modules over this algebra previously appeared in the paper \cite{bondal_logvinenko}.

\end{Example}

\begin{Example}\label{Example_P^1}
For $X_\Delta=\P^1$ the fan $\Delta$ consists of three cones $\{0\}, \sigma, -\sigma$ generated by $0, 1$ and $-1$ respectively, and the algebra $A(\Delta)$ is the algebra of endomorphisms of the trivial bundle of rank 3, preserving a pair of planes intersecting in a line in the fiber over $1\in\mathbb{C}^\times$. In terms of diagrams, the category of perverse sheaves corresponds to diagrams of the form 
% https://q.uiver.app/#q=WzAsMyxbMiwwLCJcXFBzaSJdLFs0LDAsIlxcUGhpXzIiXSxbMCwwLCJcXFBoaV8xIl0sWzAsMSwidV8yIiwwLHsiY3VydmUiOi0xfV0sWzEsMCwidl8yIiwwLHsiY3VydmUiOi0xfV0sWzAsMiwidV8xIiwyLHsiY3VydmUiOjF9XSxbMiwwLCJ2XzEiLDIseyJjdXJ2ZSI6MX1dXQ==
\[\begin{tikzcd}
	{\Phi_1} && \Psi && {\Phi_2}
	\arrow["{u_2}", curve={height=-6pt}, from=1-3, to=1-5]
	\arrow["{v_2}", curve={height=-6pt}, from=1-5, to=1-3]
	\arrow["{u_1}"', curve={height=6pt}, from=1-3, to=1-1]
	\arrow["{v_1}"', curve={height=6pt}, from=1-1, to=1-3]
\end{tikzcd}\] with the already familiar invertibilty conditions and the additional relation $\id_\Psi-v_1u_1=(\id_\Psi-v_2u_2)^{-1}.$ As a matrix algebra, $A(\Delta)$ consists of matrices of the form

\[\begin{pmatrix}
    a & b & c\\
    (1-t)d & e & (1-t)f\\
    (1-t^{-1})g & (1-t^{-1})h & l
\end{pmatrix},\] where $a,\ldots, l$ are arbitrary Laurent polynomials and the rows and columns correspond to the cones in the order $\{0\}, \sigma, -\sigma$. Unlike the previous affine case, there doesn't seem to be a direct way to get this algebra as a localization of some (bound) path algebra of the quiver on three vertices. 
\end{Example}

The following theorem is the main result of this section.
\begin{Theorem}\label{Theorem_perverse_modules}
    The category $\Perv(X_\Delta)$ of perverse sheaves on a smooth toric variety $X_\Delta$ constructible with respect to its orbit stratification is equivalent to the category $A(\Delta)-\mathrm{mod^{fd}}$ of finite-dimensional left $A(\Delta)$-modules.
\end{Theorem}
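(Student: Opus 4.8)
The plan is to reduce to the affine toric charts of $X_\Delta$ and then glue by monadic descent. Cover $X_\Delta=\bigcup_\sigma U_\sigma$ by the affine opens attached to the maximal cones $\sigma$ of $\Delta$, so that $U_\sigma\cong\C^{\dim\sigma}\times(\C^\times)^{n-\dim\sigma}$ and $U_\sigma\cap U_\tau=U_{\sigma\cap\tau}$, and let $j\colon\bigsqcup_\sigma U_\sigma\to X_\Delta$ be the covering map. Since a toric variety is separated, each $U_\sigma\hookrightarrow X_\Delta$ is an affine open immersion, so $j_!$ and $j_*$ are perverse $t$-exact and the adjunctions $j_!\dashv j^*\dashv j_*$ restrict to the perverse hearts. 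As $j^*\colon\Perv(X_\Delta)\to\prod_\sigma\Perv(U_\sigma)$ is exact and conservative (we have an open cover) and $\Perv(X_\Delta)$ is abelian, Beck's monadicity theorem applies: $\Perv(X_\Delta)$ is equivalent to the category of algebras over the monad $j^*j_!$ on $\prod_\sigma\Perv(U_\sigma)$ (one could equally use the comonad $j^*j_*$). This replaces the gluing of diagrams of vector spaces by a gluing of module categories.

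The first ingredient is the affine case: for a smooth cone $\sigma$ there is an equivalence $\Perv(U_\sigma)\simeq A(\sigma)-\mathrm{mod^{fd}}$, compatible with the restriction functors $\Perv(U_\sigma)\to\Perv(U_{\sigma'})$ attached to faces $\sigma'\prec\sigma$. For the factor $\C^{\dim\sigma}$ with its coordinate-hyperplane stratification this is a repackaging of the Maisonobe--Galligo--Granger hypercube description \cite{galligo1985mathcal}: a finite-dimensional $A(\sigma)$-module $M$ decomposes along the diagonal idempotents $e_\rho$ ($\rho\prec\sigma$) into spaces $V_\rho=e_\rho M$; each corner $e_\rho A(\sigma)e_\rho\cong k[N]$ makes $V_\rho$ a $k[N]$-module, i.e.\ equips it with $n$ commuting monodromy automorphisms, combining the genuine monodromy of the local system on the orbit $O_\rho$ with the vanishing-cycle monodromies $\id-v_e u_e$ in the normal directions, while the off-diagonal blocks, forced to consist of multiples of $\prod_{v_i\in\rho\setminus\rho'}(1-v_i)$, encode the $\mathrm{can}$ and $\mathrm{var}$ maps of the hypercube; the invertibility conditions of the Maisonobe--Galligo--Granger description hold automatically, as the $v_i$ are units in $k[N]$. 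The remaining directions $(\C^\times)^{n-\dim\sigma}$ only tensor everything with the group algebra of $\mathbb{Z}^{n-\dim\sigma}$ (free monodromy), matching $k[N]\cong k[N\cap\Span\sigma]\otimes k[\mathbb{Z}^{n-\dim\sigma}]$. For the compatibility statement one checks that restriction along $\sigma'\prec\sigma$ corresponds to $M\mapsto f_{\sigma'}M$, where $f_{\sigma'}=\sum_{\rho\prec\sigma'}e_\rho$ and $f_{\sigma'}A(\sigma)f_{\sigma'}\cong A(\sigma')$.

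It then remains to identify the descent monad with the one attached to $A(\Delta)$. Put $B=\prod_\sigma A(\sigma)$, so that $\prod_\sigma\Perv(U_\sigma)\simeq B-\mathrm{mod^{fd}}$, and realize $A(\sigma)\cong f_\sigma A(\Delta)f_\sigma$ inside $A(\Delta)$ for $f_\sigma=\sum_{\rho\prec\sigma}e_\rho$, noting $f_\sigma f_\tau=f_{\sigma\cap\tau}$. Base change along the Cartesian squares $U_{\sigma\cap\tau}\hookrightarrow U_\tau$ over $U_\sigma\hookrightarrow X_\Delta$ (unconditional for $j_!$, and smooth base change for $j_*$) gives $(j^*j_!M)_\tau\cong\bigoplus_\sigma\big(\text{extension by zero to }U_\tau\text{ of }M_\sigma|_{U_{\sigma\cap\tau}}\big)$; rewriting restriction to a sub-chart as $f_{\sigma\cap\tau}(-)$ and the open extension-by-zero as the corresponding induction, and applying the idempotent identity $Re\otimes_{eRe}eR\cong ReR$, one obtains that the monad sends $M$ to $C\otimes_B M$ for the $B$-bimodule $C=\bigoplus_{\sigma,\tau}f_\tau A(\Delta)f_\sigma$, with algebra structure induced from that of $A(\Delta)$. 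This step uses a short combinatorial lemma, $f_\tau A(\Delta)f_{\sigma\cap\tau}A(\Delta)f_\sigma=f_\tau A(\Delta)f_\sigma$, proved by factoring an off-diagonal matrix unit $E_{\rho,\rho'}$ (together with its mandatory divisor $\prod_{v_i\in\rho\setminus\rho'}(1-v_i)$) through the face $\rho\cap\rho'\prec\sigma\cap\tau$. Finally, algebras over the monad $C\otimes_B(-)$ are the same as modules over $C$ regarded as a ring in $B$-bimodules, and $L\mapsto(f_\sigma L)_\sigma$ is an equivalence from $A(\Delta)-\mathrm{mod^{fd}}$ onto them; both its full faithfulness and essential surjectivity reduce to the identity $\sum_\sigma A(\Delta)f_\sigma A(\Delta)=A(\Delta)$, which holds because every cone $\rho$ is a face of some maximal cone $\sigma$, whence $e_\rho=e_\rho f_\sigma e_\rho\in A(\Delta)f_\sigma A(\Delta)$. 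Composing this with the monadic equivalence, which by construction is compatible with $j^*$ and with restriction to charts, yields $\Perv(X_\Delta)\simeq A(\Delta)-\mathrm{mod^{fd}}$.

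The main obstacle is the last step --- matching the abstract descent datum with the explicit bimodule $C$ and then recovering $A(\Delta)-\mathrm{mod^{fd}}$ from the resulting monad. The base-change bookkeeping (remaining inside perverse, finite-dimensional objects at every turn) and, above all, the combinatorics of the divisibility conditions on the off-diagonal blocks have to be handled with care: this is precisely where the gluing of \cite{Dupont2010thesis} went wrong, and the corrected relations, such as $M_{ij}=M_{ik}^{-1}N_i^{-1}$ from the introduction, must fall out of it. By contrast, the monadicity of $j^*$ is essentially formal once the $t$-exactness of $j_!$ and $j_*$ on this cover is observed, and the affine case is a translation of a known theorem together with the elementary remark that the $v_i$ act invertibly on any $k[N]$-module.
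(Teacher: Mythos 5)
Your overall strategy --- reduce to the affine toric charts, identify $\Perv(U_\sigma)$ with $A(\sigma)\text{-}\mathrm{mod^{fd}}$ via Maisonobe--Galligo--Granger, and glue by monadic descent --- is essentially the paper's, the main packaging difference being that you apply Beck's theorem directly to $j^*$ on perverse sheaves and then compute the monad, whereas the paper runs Beck plus B\'enabou--Roubaud on the algebra side and quotes smooth descent for perverse sheaves, then matches the two categories of descent data chart by chart. But there is a genuine gap at the crucial step where you identify the monad $j^*j_!$ with $C\otimes_B(-)$. That identification needs the multiplication map $f_\tau A(\Delta)f_{\sigma\cap\tau}\otimes_{A(\sigma\cap\tau)}f_{\sigma\cap\tau}A(\Delta)f_\sigma\to f_\tau A(\Delta)f_\sigma$ to be an isomorphism, and you deduce this from ``the idempotent identity $Re\otimes_{eRe}eR\cong ReR$''. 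No such identity holds for a general ring $R$ and idempotent $e$: the multiplication map $Re\otimes_{eRe}eR\to ReR$ is always surjective but frequently not injective (take $R$ the path algebra of the quiver $1\to 2\to 3$ modulo the two-step composite and $e$ the idempotent at the middle vertex; the source has dimension $4$, the target $3$). Your ``short combinatorial lemma'' $f_\tau A(\Delta)f_{\sigma\cap\tau}A(\Delta)f_\sigma=f_\tau A(\Delta)f_\sigma$ only reproves surjectivity. The injectivity is exactly the content of the paper's base-change statement (\autoref{base_change}), and it is not formal: there one constructs an explicit section $\delta(nE_{\alpha,\beta})=nE_{\alpha,\beta\cap\alpha}\otimes E_{\beta\cap\alpha,\beta}$ and verifies $\delta\mu=\id$ inside the tensor product by repeatedly rerouting matrix units through intersections of faces, using the divisibility constraints to check that each intermediate factor lies in the correct corner algebra. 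Your idea of factoring $E_{\rho,\rho'}$ through $\rho\cap\rho'$ is the right germ of that argument, but it has to be carried out inside the tensor product, not merely at the level of two-sided products; as written, this step fails.

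A secondary soft spot: the compatibility of the chart equivalences with restriction to faces (your claims that restriction along $\sigma'\prec\sigma$ becomes $M\mapsto f_{\sigma'}M$ and that the monadic comparison is ``by construction'' compatible with $j^*$) is asserted rather than proved. Each chart equivalence depends on a choice of coordinates, i.e.\ of a lattice basis adapted to the cone, and the bulk of the paper's proof of \autoref{Theorem_perverse_modules} is devoted to exactly this coherence check (the squares involving the automorphisms $\beta_\sigma\beta_\tau^{-1}$). Since the error in Dupont's gluing is precisely a failure at the gluing stage, this should be written out; once it and the base-change isomorphism are in place, the remainder of your outline (the Morita-type step using $\sum_\sigma A(\Delta)f_\sigma A(\Delta)=A(\Delta)$) is sound.
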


We proceed by first identifying the category of $A(\Delta)$-modules with the category of descent data with respect to an affine cover of $X_\Delta$ by affine toric varieties and then identifying the category $A(\Delta)$ for affine $X_\Delta$ with perverse sheaves on $X_\Delta$.

\subsection{Descent data}

In this section, we are going to perform some groundwork to then apply classic results of monadic descent, namely the Beck monadicity criterion (also known under the name Barr-Beck criterion) and the Bénabou-Roubaud theorem. For an overview of monadic descent we refer the reader to \cite{janelidze1994facets}, and for the proof of the Bénabou-Roubaud theorem to \cite{kahn2024benabouroubaud}.

For a non-unital morphism of unital rings $f: B\to A$ there is a pair of adjoint functors $f_!(-)=Af(1)\otimes_B (-)$ and $f^*(-)=f(1)(-)$ between the categories of unital modules $B\text{-}\mathrm{Mod}$ and $A\text{-}\mathrm{Mod}.$  For an idempotent $e\in A$ and the inclusion $i: eAe\to A$ these simplify to $i_!(M)=Ae\otimes_{eAe} M$ and $i^*(N)=eN.$ If a pair of idempotents $e,e'\in A$ is such that $ee'=e$, then there is also an inclusion $eAe\subset e'Ae'.$ Our goal now is to construct a descent theory for modules over $A$ in which the role of open charts is performed by the subalgebras of the form $eAe$.

Consider a unital $k$-algebra $A$ and define the indexing category $\mathcal{B}$ as follows. Its objects are finite collections $e_J=(e_j)_{j\in J}$ of commuting idempotents $e_j\in A$, and the morphisms $e_J\to e'_K$ are the functions $t: J\to K$ such that  $e_je'_{t(j)}=e_j$ for all $j\in J$. The composition of morphisms is done by simply composing the functions. We use the indexing category $\mathcal{B}$ to define similar functors between products of module categories.

Consider the category $\mathcal{E}$, the objects of which are collections of pairs $(e_j, M_j)_{j\in J}$, which will henceforth be simply denoted $(e_J,M_J)$, where $e_J\in \mathcal{B}$ and $M_j\in e_jAe_j\text{-}\mathrm{mod}$. A morphism $(e_J, M_J)\to (e'_K, N_K)$ is a tuple $(t,f_J)=(t, (f_j)_{j\in J})$, where $t\in \Hom_\mathcal{B}(e_J,e'_K)$ and $f_j\in \Hom_{e_jAe_j}(M_j,e_jN_{t(j)}).$

There is an evident projection functor $P:\mathcal{E}\to\mathcal{B}.$ For $B\in \mathcal{B}$ we denote by $\mathcal{E}(B)$ the fiber of $P$ over $B$, which is the non-full subcategory of $\mathcal{E}$ consisting of objects $E\in \mathcal{E}$ such that $P(E)=B$ and morphisms $f: E_1\to E_2$ such that $P(E_1)=P(E_2)=B$ and $P(f)=\id_B$. In our case, $\mathcal{E}(e_J)$ is the category $\prod\limits_{j\in J}e_jAe_j\text{-}\mathrm{mod}.$

 A reminder: a bifibration is a functor $P:\mathcal{C}\to\mathcal{D}$, which under the Grothendieck construction corresponds to an adjoint pair of pseudofunctors $\mathcal{D}\to\mathcal{C}at$. Equivalently, $P$ is a bifibration if every morphism $f$ in $\mathcal{D}$ admits both a cartesian and a cocartesian lifting. A classic example of a bifibration is the "large"\ category of modules $\mathcal{M}od$, the objects of which are tuples $(R,M)$ of a ring and a module over it, with the projection functor $\mathcal{M}od\to\mathcal{R}ing$.

The functor $P$ makes $\mathcal{E}$ into a bifibration over $\mathcal{B}$. For a morphism $p: e_J\to e'_K$ in $\mathcal{B}$ the cartesian lifting of $p$ at $X=(e'_K, N_K)$ is the pair $(p^*X,\nu_{p,X} X: p^*X\to X),$ where $p^*X=(e_J, p^*N_J)$ with $p^*N_j=e_jN_{p(j)}$, and $\nu_{p,X}$ is the tuple consisting of $p: e_J\to e'_K$ and the identity morphisms $\id: e_{j}N_{p(j)}\to e_{j}N_{p(j)}$.

The cocartesian lifting of $p$ at $Y=(e_J, M_J)$ is the pair $(p_!Y,\delta_{p,Y} Y:Y\to p_! Y),$ where $p_! Y=(e'_K, p_! M_K)$ with $p_!M_k=\prod\limits_{j: p(j)=k} e'_kAe_j\otimes_{e_jAe_j} M_j$, and $\delta_{p,Y}$ consists of $p: e_J\to e'_K$ and the morphisms $(\delta_{p,Y})_j: M_j\to  e_jp_!M_{p(j)}$ obtained by adjunction from $\id: e'_{p(j)}Ae_j\otimes_{e_jAe_j} M_j\to e'_{p(j)}Ae_j\otimes_{e_jAe_j} M_j$ followed by inclusion of a factor into the (finite) product.

The easy check that these liftings are actually (co-)cartesian is left to the interested reader. Defining $p^*, p_!$ on morphisms in the evident way, we obtain for each $p:e_J\to e'_K$ the pair of adjoint functors $p_!\dashv p_*$ between the categories $\mathcal{E}(e_J)=\prod\limits_{j\in J}e_jAe_j\text{-}\mathrm{mod}$ and $\mathcal{E}(e'_K)=\prod\limits_{k\in K}e'_kAe'_k\text{-}\mathrm{mod},$ which are generalizations of $f_!$ and $f^*$ for a morphism of rings $f: B\to A$.

%%%%%%%%%%%%%%%%%%%%%%%%%%%%%%%%%%%%%%%%%%%%%%%%%%%%%%%%%%%%%
\iffalse
As a sanity check, we spell out the 
isomorphism of $\Hom$-sets for $X, Y$ as above:

$$\Hom_{\mathcal{E}(e'_K)}(p_! Y,X)=\prod\limits_{k\in K} \Hom_{e'kAe'k}\left(\prod\limits_{k: p(k)=j} A(B_{p(k)})\otimes_{A(E_k)}M_k,N_j\right)=$$  $$=\prod\limits_{k\in K} \Hom_{A(B_{p(k)})}(A(B_{p(k)})\otimes_{A(E_k)}M_k,N_p(k))=$$ $$=\prod\limits_{k\in K} \Hom_{A(E_k)}(M_k,e(E_k)N_p(k))=\Hom_{\mathcal{E}(E_K)}(Y, p^*X)$$

\fi
%%%%%%%%%%%%%%%%%%%%%%%%%%%%%%%%%%%%%%%%%%%%%%%%%%%%%%%%%%%%%

Consider the morphism $p: E\to \ast$, where $E=e_J$ is arbitrary and $\ast$ is the terminal object of $\mathcal{B}$ consisting of a single idempotent $1$. We also have $\mathcal{E}(\ast)=A\text{-}\mathrm{mod}$.

\begin{Proposition}\label{monadicity}
    If $E=e_J$ is such that the ideal $(e_J)$ is equal to $A$, then the pullback functor $p^*$ along $p: E\to \ast$ is monadic.
\end{Proposition}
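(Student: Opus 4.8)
The plan is to prove this by checking the hypotheses of Beck's monadicity theorem for the adjoint pair $p_{!}\dashv p^{*}$ obtained from the cocartesian and cartesian liftings of $p\colon E\to\ast$ described above. Concretely,
\[
p^{*}\colon A\text{-}\mathrm{mod}\longrightarrow \prod_{j\in J} e_{j}Ae_{j}\text{-}\mathrm{mod},\qquad N\longmapsto (e_{j}N)_{j\in J},
\]
and $p_{!}(M_{J})=\bigoplus_{j\in J} Ae_{j}\otimes_{e_{j}Ae_{j}} M_{j}$. I would use the crude form of Beck's theorem, so it is enough to verify: (i) $p^{*}$ has a left adjoint; (ii) $A\text{-}\mathrm{mod}$ has coequalizers and $p^{*}$ preserves them; and (iii) $p^{*}$ is \emph{conservative}, i.e. reflects isomorphisms. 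Item (i) is exactly the construction of $p_{!}$ recalled above, so the real work is in (ii) and (iii), and the hypothesis $(e_{J})=A$ will be used only for (iii).

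For (ii), I would observe that each coordinate of $p^{*}$ is the functor $e_{j}(-)=e_{j}A\otimes_{A}(-)$ with values in $e_{j}Ae_{j}\text{-}\mathrm{mod}$. Since $e_{j}$ is idempotent, $A=e_{j}A\oplus(1-e_{j})A$ as right $A$-modules, so $e_{j}A$ is projective, hence flat, and $e_{j}(-)$ is exact. Coequalizers in the additive categories in play are cokernels of differences, so $p^{*}$ preserves all coequalizers — in particular those of reflexive (or $p^{*}$-split) pairs, which is all the crude criterion requires; it also preserves all finite limits, but that is not needed.

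For (iii), suppose $\varphi\colon N\to N'$ is a morphism of left $A$-modules with $e_{j}\varphi$ an isomorphism for every $j\in J$. Exactness of $e_{j}(-)$ gives $e_{j}\ker\varphi=\ker(e_{j}\varphi)=0$ and $e_{j}\operatorname{coker}\varphi=\operatorname{coker}(e_{j}\varphi)=0$ for all $j$, so it suffices to show that any left $A$-module $M$ with $e_{j}M=0$ for all $j$ vanishes. This is where the assumption enters: since the two-sided ideal $(e_{J})=\sum_{j}Ae_{j}A$ equals $A$, we may write $1=\sum_{j\in J}\sum_{k} a_{jk}e_{j}b_{jk}$ as a finite sum in $A$, and then $m=\sum_{j,k}a_{jk}\bigl(e_{j}(b_{jk}m)\bigr)\in\sum_{j}A\cdot e_{j}M=0$ for every $m\in M$. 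Hence $\ker\varphi=\operatorname{coker}\varphi=0$, so $\varphi$ is an isomorphism and $p^{*}$ is conservative.

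With (i)--(iii) established, the crude form of Beck's monadicity theorem gives that $p^{*}$ is monadic. I do not expect a genuine obstacle: once the bifibration $\mathcal{E}\to\mathcal{B}$ has been set up, the statement is essentially formal. The one point that carries content — and the only place the hypothesis is used — is the conservativity of step (iii): the identity $(e_{J})=A$ is precisely the algebraic avatar of the requirement that the charts jointly cover $A$, the analogue of a jointly surjective family of open immersions in faithfully flat descent. The only mild care needed is to invoke Beck's theorem in the form that asks for preservation (not creation) of coequalizers of reflexive pairs, which the exactness of the $e_{j}(-)$ supplies directly.
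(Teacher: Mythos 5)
Your proof is correct and follows essentially the same route as the paper: Beck's monadicity criterion, with the left adjoint $p_!$, exactness of each $e_j(-)$ giving preservation of the required coequalizers, and the hypothesis $(e_J)=A$ used only to show $p^*$ reflects isomorphisms. Your conservativity step (passing to kernel and cokernel and killing any module $M$ with $e_jM=0$ for all $j$ via $1=\sum a_{jk}e_jb_{jk}$) is in fact a slightly more careful rendering of the paper's injectivity/surjectivity argument.
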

\begin{proof}
    By Beck's Monadicity Theorem, a functor $U:\mathcal{C}\to\mathcal{D}$ is monadic if and only if it has a left adjoint, reflects isomorphisms, and preserves certain coequalizers in $\mathcal{C}$. In our case, $p^*: A\text{-}\mathrm{mod}\to \prod\limits_{j\in J} A(E_j)\text{-}\mathrm{mod}$ is the right adjoint to $p_!$ and is a finite product of exact functors between abelian categories, so is itself exact. So it only remains to check that $p^*$ reflects isomorphisms.

    Consider a morphism of $A$-modules $f: M\to N$ and assume that $\forall j\in J$ $(p^*f)_j: e_jM\to e_jN$ is an isomorphism. Then $f$ is injective, since $f(m)=0$ would imply $f(e_j m)=0$, so $(p^*f)_j(e_jm)=0$ $\forall j\in \Delta$. But not all $e_jm$ can simultaneosly be zero, since that would imply that $(e_J)$ annihilates $m$. $f$ is also surjective, since it is surjective onto the subspaces $e_jN$, which generate $N$ as a module.
\end{proof}

The above proposition allows us to identify the category of modules over $A$ with some category of algebras (also known as modules) over a monad. To pass from the category of algebras over a monad to descent data we need a bit more. Consider the fiber square in $\mathcal{B}$
% https://q.uiver.app/#q=WzAsNCxbMCwwLCJFX0pcXHRpbWVzX0IgRV9KIl0sWzAsMiwiRV9KIl0sWzIsMiwiQiJdLFsyLDAsIkVfSiJdLFswLDEsIlxccGlfMSIsMl0sWzAsMywiXFxwaV8yIl0sWzEsMiwicCIsMl0sWzMsMiwicCJdXQ==
\[\begin{tikzcd}[ampersand replacement=\&,cramped]
	{E\times_B E} \&\& {E} \\
	\\
	{E} \&\& \ast
	\arrow["{\pi_1}"', from=1-1, to=3-1]
	\arrow["{\pi_2}", from=1-1, to=1-3]
	\arrow["p"', from=3-1, to=3-3]
	\arrow["p", from=1-3, to=3-3]
\end{tikzcd}\]

Explicitly, the product $E\times E$ is the set $\{e_{ij}\}_{i,j\in J}$ where $e_{ij}=e_ie_j$, and $\pi_1(e_{ij})=e_i,\ \pi_2(e_{ij})=e_j$. We also obtain the diagram of functors

\[\begin{tikzcd}
	\mathcal{E}({E\times_B E}) && \mathcal{E}({E}) \\
	\\
	\mathcal{E}({E}) && \mathcal{E}(B)
	\arrow["{\pi_{1!}}"', from=1-1, to=3-1]
	\arrow["{\pi_{2}^*}"', from=1-3, to=1-1]
	\arrow["p_!"'', from=1-3, to=3-3]
	\arrow["p^*", from=3-3, to=3-1]
\end{tikzcd}\] and a "base change" natural transformation $\beta_p: \pi_{1!}\pi_2^*\to p^*p_!$. $P:\mathcal{E}\to \mathcal{B}$ is said to satisfy the Beck-Chevalley condition at $p$ if $\beta_p$ is a natural isomorphism.  In our situation, if we take a collection $M_J$ of modules $M_j$ over $e_jAe_j$, we get for $j\in J$

$$(p^*p_!M_J)_j=\prod\limits_{i\in J} e_jAe_i\underset{e_iAe_i}{\otimes}M_i,$$

$$(\pi_{1!}\pi_2^* M_J)_j=\prod\limits_{i\in J} e_jAe_{ij}\underset{e_{ij}Ae_{ij}}{\otimes} e_{ij} M_i.$$  The natural transformation $\beta_p$ is constructed as the composite of adjunction morphisms

$$\pi_{1!}\pi_2^*\xrightarrow[]{\eta}\pi_{1!}\pi_2^*p^*p_!=\pi_{1!}\pi_1^*p^*p_!\xrightarrow[]{\varepsilon}p^*p_!.$$  From the construction of the functors involved, it is clear that this morphism is a product of morphisms $\beta_{ij}$ coming from diagrams of algebras of the form
\[\begin{tikzcd}
	e_{ij}Ae_{ij} && e_{j}Ae_{j} \\
	\\
	 e_{i}Ae_{i} && A
	\arrow["{q_{1}}", from=1-1, to=3-1]
	\arrow["{q_{2}}", from=1-1, to=1-3]
	\arrow["p_{2}"', from=1-3, to=3-3]
	\arrow["p_{1}", from=3-1, to=3-3]
\end{tikzcd}\] with the morphisms $p_i$ and $q_i$ being the embeddings.

Explicitly, $\beta_{ij}$ is the composition $$q_{1!}q_2^*\xrightarrow[]{\eta}q_{1!}q_2^*p_2^*p_{2!}=q_{1!}q_1^*p_1^*p_{2!}\xrightarrow[]{\varepsilon}p_1^*p_{2!}.$$ The unit of the adjunction $\eta: \id\to p_2^*p_{2!}$ is an isomorphism, which on each object $M$ takes the form $M\cong e_jAe_j\otimes_{e_jAe_j} M=p_2^*p_{2!}(M)$. However, the counit $\varepsilon_M: q_{1!}q_{1}^*M\to M$ is the map taking $a\otimes m\in e_iAe_{ij}\otimes_{e_{ij}Ae_{ij}}e_{ij}M$ to $am\in M$, and is not in general an isomorphism, even when evaluated on objects of the form $p_1^*p_{2!}N.$ 

Now we specialize to the earlier setting, where we had a regular fan $\Delta$ and the corresponding algebra of matrices $A(\Delta)$, which we for now denote by simply $A$.  Let $E_{\sigma,\tau}$ be the matrix with only one nonzero element 1 in position $(\sigma,\tau)$. For each cone $\sigma$ we get an idempotent $E_{\sigma,\sigma}\in A(\Delta)$ such that $1=\sum\limits_{\sigma\in\Delta} E_{\sigma,\sigma}$. For each subfan $\Delta'\in\Delta$ we have an idempotent $\sum\limits_{\tau\in\Delta'} E_{\tau,\tau}.$ In particular, we get the idempotents $e_\sigma=\sum\limits_{\tau\prec sigma} E_{\tau,\tau}.$ Note that $e_{\sigma\cap\tau}=e_\sigma e_\tau.$ We also denote $A_\sigma=e_\sigma Ae_\sigma$

 We apply the previous general construction of the bifibration $P:\mathcal{E}\to \mathcal{B}$ to the case where $A=A(\Delta)$.

\begin{Proposition}\label{base_change}
    For the algebra $A=A(\Delta)$ the bifibration $P:\mathcal{E}\to \mathcal{B}$ satisfies the Beck-Chevalley condition at $p: E=\{e_\sigma\}_{\sigma\in\Delta}\to \ast$.
\end{Proposition}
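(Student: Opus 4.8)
The plan is to reduce the Beck--Chevalley condition to a purely ring-theoretic statement about $A(\Delta)$ and then verify that statement by writing down an explicit $k[N]$-module generating set of the relevant tensor product. Since $p\colon E\to\ast$ sends each idempotent $e_\sigma$ to $1$, all of $p_!,p^*,\pi_{1!},\pi_2^*$ split as finite products indexed by cones, and, from the construction of $\beta_p$ recalled just above the statement, $\beta_p$ is the product over pairs of cones $(\sigma,\tau)$ of the maps $\beta_{\sigma\tau}$ coming from the square of subalgebras with corners $A_{\sigma\cap\tau}=e_{\sigma\cap\tau}Ae_{\sigma\cap\tau}$, $A_\sigma=e_\sigma Ae_\sigma$, $A_\tau=e_\tau Ae_\tau$, $A$ (recall $e_\sigma e_\tau=e_{\sigma\cap\tau}$, and $\sigma\cap\tau$ is again a cone of $\Delta$). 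Because the unit $\eta$ occurring in $\beta_{\sigma\tau}$ is an isomorphism, unwinding the definitions identifies $\beta_{\sigma\tau}$ evaluated on an $A_\tau$-module $N$ with $m_{\sigma\tau}\otimes_{A_\tau}\id_N$, where
\[
m_{\sigma\tau}\colon\ e_\sigma Ae_{\sigma\cap\tau}\underset{A_{\sigma\cap\tau}}{\otimes}e_{\sigma\cap\tau}Ae_\tau\ \longrightarrow\ e_\sigma Ae_\tau
\]
is induced by multiplication in $A$. Hence it suffices to show that every $m_{\sigma\tau}$ is an isomorphism of $(A_\sigma,A_\tau)$-bimodules; the functor $-\otimes_{A_\tau}N$ then preserves this.

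To do that, write $c_{\rho,\rho'}=\prod_{i:\,v_i\in\rho\setminus\rho'}(1-v_i)\in k[N]$ for the divisors defining $A(\Delta)$, so that $e_\sigma Ae_\tau=\bigoplus_{\rho\prec\sigma,\ \rho'\prec\tau}c_{\rho,\rho'}k[N]\,E_{\rho,\rho'}$ as a $k[N]$-module. For a face $\rho\prec\sigma$ let $\bar\rho$ be the face of $\sigma\cap\tau$ spanned by the rays of $\rho$ that lie in $\tau$; this really is a face, since regularity of $\Delta$ makes $\sigma\cap\tau$ smooth, so that every set of its rays spans a face, and moreover $\bar\rho\prec\rho$. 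The combinatorial core is a short bookkeeping of rays using that each $c$ is a product of \emph{distinct} factors $1-v_i$: the factor set of $c_{\rho,\bar\rho}$ is $\{v_i\in\rho:v_i\notin\tau\}$, which is disjoint from the factor set $\{v_i\in\bar\rho:v_i\notin\rho''\}$ of $c_{\bar\rho,\rho''}$, and — since $\rho''\prec\tau$ and $\bar\rho$ consists of the rays of $\rho$ lying in $\tau$ — their union is exactly $\{v_i\in\rho:v_i\notin\rho''\}$, the factor set of $c_{\rho,\rho''}$. This gives the identities
\[
c_{\rho,\bar\rho}\,c_{\bar\rho,\rho'}=c_{\rho,\rho'}\ \ (\rho'\prec\sigma\cap\tau),\qquad c_{\rho,\bar\rho}\,c_{\bar\rho,\rho''}=c_{\rho,\rho''},\qquad c_{\bar\rho,\rho''}\mid c_{\bar\rho,\rho'}\,c_{\rho',\rho''}.
\]

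From here I would argue as follows. The module $e_\sigma Ae_{\sigma\cap\tau}$ is generated over $k[N]$ by the $c_{\rho,\rho'}E_{\rho,\rho'}$, and likewise for $e_{\sigma\cap\tau}Ae_\tau$; in the tensor product a pure tensor with mismatched inner indices is killed by inserting the idempotent $E_{\rho',\rho'}\in A_{\sigma\cap\tau}$, while for a matching inner index one uses the first identity to write $c_{\rho,\rho'}E_{\rho,\rho'}=(c_{\rho,\bar\rho}E_{\rho,\bar\rho})(c_{\bar\rho,\rho'}E_{\bar\rho,\rho'})$, slides the second factor across the tensor sign into $A_{\sigma\cap\tau}$, and applies the third relation. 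The upshot is that $e_\sigma Ae_{\sigma\cap\tau}\otimes_{A_{\sigma\cap\tau}}e_{\sigma\cap\tau}Ae_\tau$ is generated over $k[N]$ by the single element $g_{\rho,\rho''}=(c_{\rho,\bar\rho}E_{\rho,\bar\rho})\otimes(c_{\bar\rho,\rho''}E_{\bar\rho,\rho''})$ for each pair $(\rho,\rho'')$ with $\rho\prec\sigma$, $\rho''\prec\tau$. By the second identity $m_{\sigma\tau}(g_{\rho,\rho''})=c_{\rho,\rho''}E_{\rho,\rho''}$, which is precisely the generator of the $(\rho,\rho'')$-summand in the direct sum decomposition of $e_\sigma Ae_\tau$ above (each summand free of rank one, since $k[N]$ is a domain and $c_{\rho,\rho''}\neq0$). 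Thus $m_{\sigma\tau}$ carries a generating family onto a basis adapted to a direct sum decomposition: it is surjective, and a relation $\sum f_{\rho,\rho''}g_{\rho,\rho''}\in\ker m_{\sigma\tau}$ forces $f_{\rho,\rho''}c_{\rho,\rho''}=0$, hence $f_{\rho,\rho''}=0$, for all $(\rho,\rho'')$, so it is injective.

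The one genuinely delicate point is the identity $c_{\rho,\bar\rho}c_{\bar\rho,\rho''}=c_{\rho,\rho''}$ and its companions: the intermediate face $\bar\rho$ must be taken exactly right (the rays of $\rho$ lying in $\tau$), and smoothness of the fan is used essentially — for a non-regular cone the divisors $c$ would carry higher multiplicities and these products would no longer match. The remaining ingredients — that $\beta_p$ decomposes as the product of the $\beta_{\sigma\tau}$, that $\beta_{\sigma\tau}$ is $m_{\sigma\tau}\otimes_{A_\tau}\id$, and the manipulation inside the tensor product — are routine given the bifibration setup of the previous subsection.
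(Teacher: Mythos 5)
Your proposal is correct and takes essentially the same route as the paper's proof: the reduction of $\beta_p$ to the per-pair multiplication maps $m_{\sigma\tau}\colon e_\sigma Ae_{\sigma\cap\tau}\otimes_{A_{\sigma\cap\tau}}e_{\sigma\cap\tau}Ae_\tau\to e_\sigma Ae_\tau$ is identical, and your reduction of a pure tensor to a $k[N]$-multiple of $g_{\rho,\rho''}$ via the face $\bar\rho=\rho\cap\tau$ is the same sliding computation the paper uses to check that its explicit map $\delta(nE_{\alpha,\beta})=nE_{\alpha,\beta\cap\alpha}\otimes E_{\beta\cap\alpha,\beta}$ is a two-sided inverse of $\mu$. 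The only difference is bookkeeping: instead of writing down $\delta$, you exhibit a $k[N]$-generating family carried onto the free basis $c_{\rho,\rho''}E_{\rho,\rho''}$ of $e_\sigma Ae_\tau$, which is an equally valid way to conclude bijectivity.
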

\begin{proof}

We have just seen that to see that the base change morphism is an isomorphism it is enough to check that for its components, which are the base change morphisms for the diagrams 
\[\begin{tikzcd}
	A_{\sigma\cap\tau} && A_{\tau} \\
	\\
	 A_{\sigma} && A
	\arrow["{q_{1}}", from=1-1, to=3-1]
	\arrow["{q_{2}}", from=1-1, to=1-3]
	\arrow["p_{2}"', from=1-3, to=3-3]
	\arrow["p_{1}", from=3-1, to=3-3]
\end{tikzcd}\] which are the compositions $$q_{1!}q_2^*\xrightarrow[]{\eta}q_{1!}q_2^*p_2^*p_{2!}=q_{1!}q_1^*p_1^*p_{2!}\xrightarrow[]{\varepsilon}p_1^*p_{2!}.$$ We have to show that for any $M\in e_{\tau}Ae_{\tau}\text{-}\mathrm{mod}$ the morphism $\varepsilon_{p_1^*p_{2!}M}$ is an isomorphism. Explicitly, 
$$p_1^*p_{2!}M=e_\sigma Ae_\tau\underset{A_\tau}{\otimes} M,$$
$$q_{1!}q_1^*p_1^*p_{2!}M=e_\sigma Ae_{\sigma\cap\tau}\underset{A_{\sigma\cap\tau}}{\otimes}e_{\sigma\cap\tau}Ae_\tau\underset{A_\tau}{\otimes} M,$$ and $\varepsilon_{p_1^*p_{2!}M}(a\otimes b\otimes m)=ab\otimes m.$ This morphism can be viewed as the result of applying $(-)\otimes_{e_\tau A e_\tau} M$ to the multiplication morphism $$\mu: e_\sigma Ae_{\sigma\cap\tau}\underset{A_{\sigma\cap\tau}}{\otimes}e_{\sigma\cap\tau}Ae_\tau\to e_\sigma Ae_\tau,$$ and so it is enough to show that $\mu$ itself is an isomorphism.

As a matrix algebra, $A$ consists of the matrices $Q$ such that, when rows and columns are indexed by the cones of $\Delta$, the component $Q_{\alpha,\beta}$ is divisible by $\prod\limits_{i:\ v_i\in \alpha\setminus\beta } (1-v_i).$
Define $$\delta: e_\sigma Ae_\tau\to e_\sigma Ae_{\sigma\cap\tau}\underset{A_{\sigma\cap\tau}}{\otimes}e_{\sigma\cap\tau}Ae_\tau$$ by the rule $\delta(nE_{\alpha,\beta})=nE_{\alpha,\beta\cap\alpha}\otimes E_{\beta\cap\alpha,\beta},$ where $n\in k[N]$. Note that since $nE_{\alpha,\beta}$ is an element of $e_\sigma A e_\tau$, $n$ must be divisible by $\prod\limits_{i:\ v_i\in \alpha\setminus\beta } (1-v_i),$ and hence $nE_{\alpha,\beta\cap\alpha}$ is an element of $e_\sigma A e_{\sigma\cap\tau}$. Similarly, $E_{\beta\cap\alpha,\beta}$ is an element of $e_{\sigma\cap\tau}Ae_\tau,$ and hence $\delta$ is well-defined.

It is clear that $\mu\delta=\id.$ To show that $\delta$ and $\mu$ are inverse isomorphisms it remains to show that for all $\alpha,\beta,\gamma$ that are faces of $\sigma,\tau,$ and $\sigma\cap\tau$ respectively, and $n,m\in k[N]$ such that $nE_{\alpha,\gamma}\in e_\sigma Ae_{\sigma\cap\tau},$ $mE_{\gamma,\beta}\in e_{\sigma\cap\tau} Ae_\sigma,$ we have

$$nE_{\alpha,\gamma}\otimes mE_{\gamma,\beta}=\delta\mu(nE_{\alpha,\gamma}\otimes mE_{\gamma,\beta})=nmE_{\alpha,\beta\cap\alpha}\otimes E_{\beta\cap\alpha,\beta}.$$ This follows form the fact that $mE_{\gamma,\beta\cap\gamma}\in A_{\sigma\cap\tau}$, which is used in the second step in the following computation: \begin{alignat*}{3}
   &\phantom{{}={}}nE_{\alpha,\gamma}\otimes mE_{\gamma,\beta}&&=nE_{\alpha,\gamma}\otimes mE_{\gamma,\beta\cap\gamma}E_{\beta\cap\gamma,\beta}&&=\\
   &=nmE_{\alpha,\gamma}E_{\gamma,\beta\cap\gamma}\otimes E_{\beta\cap\gamma,\beta}&&=nmE_{\alpha,\beta\cap\gamma}\otimes E_{\beta\cap\gamma,\beta}&&=\\  &=nmE_{\alpha,\beta\cap\gamma\cap\alpha}E_{\beta\cap\gamma\cap\alpha,\beta\cap\gamma}\otimes E_{\beta\cap\gamma,\beta}&&=nmE_{\alpha,\beta\cap\gamma\cap\alpha}\otimes E_{\beta\cap\gamma\cap\alpha,\beta\cap\gamma} E_{\beta\cap\gamma,\beta}&&=\\
   &=nmE_{\alpha,\beta\cap\gamma\cap\alpha}\otimes E_{\beta\cap\gamma\cap\alpha,\beta}
   &&=nmE_{\alpha,\beta\cap\gamma\cap\alpha}\otimes E_{\beta\cap\gamma\cap\alpha,\beta\cap\alpha}E_{\beta\cap\alpha,\beta}&&=
   \\&=nmE_{\alpha,\beta\cap\gamma\cap\alpha}E_{\beta\cap\gamma\cap\alpha,\beta\cap\alpha}\otimes E_{\beta\cap\alpha,\beta}&&=nmE_{\alpha,\beta\cap\alpha}\otimes E_{\beta\cap\alpha,\beta}.
\end{alignat*}
\end{proof}

Now for a regular fan $\Delta$ we define the category of descent data $\Desc(\Delta)$. An object of $\Desc(\Delta)$ is a collection $((M_\sigma)_{\sigma\in\Delta},(\phi_\sigma^\tau)_{\tau,\sigma\in\Delta})$, where $M_\sigma$ is a $A_\sigma$-module and $$\phi_\sigma^\tau: e_{\sigma\cap\tau} M_\sigma\xrightarrow[]{\sim} e_{\sigma\cap\tau}M_\tau$$ are isomorphisms satisfying the cocycle condition $$e_{\sigma\cap\tau\cap\omega}\phi_\sigma^\omega=e_{\sigma\cap\tau\cap\omega}\phi_\tau^\omega\circ e_{\sigma\cap\tau\cap\omega}\phi_\sigma^\tau.$$

A morphism $f: ((M_\sigma), (\phi^\tau_\sigma))\to ((N_\sigma),(\psi^\tau_\sigma))$ in $\Desc(\Delta)$ is a collection of morphisms of $A_\sigma$-modules $f_\sigma: M_\sigma\to N_\sigma$ such that the diagrams

% https://q.uiver.app/#q=WzAsNCxbMCwwLCJSXntcXHNpZ21hXFxjYXBcXHRhdX1fXFxzaWdtYSBNX1xcc2lnbWEiXSxbMiwwLCJSXntcXHNpZ21hXFxjYXBcXHRhdX1fXFxzaWdtYSBOX1xcc2lnbWEiXSxbMCwyLCJSXntcXHNpZ21hXFxjYXBcXHRhdX1fXFx0YXUgTV9cXHRhdSJdLFsyLDIsIlJee1xcc2lnbWFcXGNhcFxcdGF1fV9cXHRhdSBNX1xcdGF1Il0sWzIsMywiUl57XFxzaWdtYVxcY2FwXFx0YXV9X1xcdGF1IGZfXFx0YXUiLDJdLFswLDEsIlJee1xcc2lnbWFcXGNhcFxcdGF1fV9cXHNpZ21hIGZfXFxzaWdtYSJdLFswLDIsIlxccGhpXlxcdGF1X1xcc2lnbWEiLDJdLFsxLDMsIlxccHNpX1xcc2lnbWFeXFx0YXUiXV0=
\begin{equation}\label{def_descent_morphisms}
    \begin{tikzcd}
	{e_{\sigma\cap\tau}M_\sigma} && {e_{\sigma\cap\tau}N_\sigma} \\
	\\
	{e_{\sigma\cap\tau}M_\tau} && {e_{\sigma\cap\tau}M_\tau}
	\arrow["{e_{\sigma\cap\tau}f_\tau}"', from=3-1, to=3-3]
	\arrow["{e_{\sigma\cap\tau}f_\sigma}", from=1-1, to=1-3]
	\arrow["{\phi^\tau_\sigma}"', from=1-1, to=3-1]
	\arrow["{\psi_\sigma^\tau}", from=1-3, to=3-3]
\end{tikzcd}
\end{equation} for varying $\sigma$ and $\delta$ commute. Let $\Desc(\Delta)^{\mathrm{fd}}$ be the full subcategory of $\Desc(\Delta)$ with objects such that all $M_\sigma$ are finite-dimensional.

\begin{Proposition}\label{Prop_descent_for_A-mod}
    The categories $A(\Delta)\text{-}\mathrm{mod}$ and $\Desc(\Delta)$, and also $A(\Delta)\text{-}\mathrm{mod^{fd}}$ and $\Desc(\Delta)^{\mathrm{fd}}$, are equivalent. 
\end{Proposition}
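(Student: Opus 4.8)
The plan is to deduce the equivalence from the general monadic descent machinery assembled in Propositions \ref{monadicity} and \ref{base_change}, rather than constructing the equivalence by hand. First I would record that the idempotents $e_\sigma$, $\sigma\in\Delta$, generate $A=A(\Delta)$ as a two-sided ideal: indeed every cone has the zero cone $0$ as a face, so $e_0 = E_{0,0}$ appears as a summand in each $e_\sigma$, and since the zero cone satisfies $v_i\notin 0$ for all $i$, the matrix units $E_{\sigma,0}$ and $E_{0,\sigma}$ all lie in $A$; hence $E_{\sigma,\tau}=E_{\sigma,0}E_{0,0}E_{0,\tau}\cdot(\text{appropriate divisibility factor})$ shows $1=\sum_\sigma E_{\sigma,\sigma}\in(e_J)$. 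Wait — more carefully, one just notes $E_{\sigma,\sigma} = E_{\sigma,\sigma} e_\sigma$, so each diagonal idempotent, and therefore $1$, lies in the ideal generated by $\{e_\sigma\}$. Thus Proposition \ref{monadicity} applies to $p: E=\{e_\sigma\}_{\sigma\in\Delta}\to\ast$, and $p^*: A\text{-}\mathrm{mod}\to\prod_\sigma A_\sigma\text{-}\mathrm{mod}$ is monadic.

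Next I would invoke the Bénabou–Roubaud theorem: for a bifibration $P:\mathcal E\to\mathcal B$ satisfying the Beck–Chevalley condition at $p$, the category of algebras over the monad $p^*p_!$ on $\mathcal E(E)$ is canonically equivalent to the category of descent data along $p$, i.e. to the category whose objects are pairs $(X\in\mathcal E(E),\ \theta)$ with $\theta$ a $\pi_2^*X\xrightarrow{\sim}\pi_1^*X$-type isomorphism over $E\times_B E$ satisfying the usual cocycle and normalization conditions over $E\times_B E\times_B E$. By Proposition \ref{base_change} the Beck–Chevalley condition holds at our $p$, so combining this with monadicity gives
\[
A(\Delta)\text{-}\mathrm{mod}\ \simeq\ (p^*p_!)\text{-}\mathrm{alg}\ \simeq\ \Desc(p).
\]
It then remains to identify the abstractly-defined $\Desc(p)$ with the concrete category $\Desc(\Delta)$ of the statement. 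Unwinding the definitions of $\pi_1^*,\pi_2^*$ from the explicit description of $E\times_B E=\{e_{\sigma\tau}=e_\sigma e_\tau=e_{\sigma\cap\tau}\}$, an object of $\mathcal E(E)$ is exactly a family $(M_\sigma)$ with $M_\sigma\in A_\sigma\text{-}\mathrm{mod}$; a descent datum $\theta$ is a family of isomorphisms $e_{\sigma\cap\tau}M_\sigma\xrightarrow{\sim}e_{\sigma\cap\tau}M_\tau$ (here using $p_i^*N_j = e_jN_{p_i(j)}$ and $e_{\sigma\cap\tau}M_{\sigma\cap\tau}\cong M_{\sigma\cap\tau}$); the cocycle condition over the triple product $\{e_{\sigma\cap\tau\cap\omega}\}$ becomes precisely $e_{\sigma\cap\tau\cap\omega}\phi_\sigma^\omega = e_{\sigma\cap\tau\cap\omega}\phi_\tau^\omega\circ e_{\sigma\cap\tau\cap\omega}\phi_\sigma^\tau$, and normalization forces $\phi_\sigma^\sigma=\id$. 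Morphisms translate into exactly the commuting squares \eqref{def_descent_morphisms}. This gives the equivalence $A(\Delta)\text{-}\mathrm{mod}\simeq\Desc(\Delta)$.

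For the finite-dimensional statement, I would check that the equivalence is compatible with the finiteness conditions on both sides. The functor $A\text{-}\mathrm{mod}\to\Desc(\Delta)$ sends $M$ to $(e_\sigma M)_\sigma$ with the obvious identifications; if $M$ is finite-dimensional over $k$ then each $e_\sigma M$ is, so we land in $\Desc(\Delta)^{\mathrm{fd}}$. Conversely, the quasi-inverse sends $((M_\sigma),(\phi_\sigma^\tau))$ to a module built as a suitable equalizer/limit of the $A e_\sigma\otimes_{A_\sigma} M_\sigma$; since $Ae_\sigma$ is finitely generated as a $k$-module only after restricting to a single column block, one should instead argue directly that the underlying $k$-space of the reconstructed module $M$ satisfies $e_0 M \cong \bigoplus$-type bound in terms of the $M_\sigma$, or simply note $M\hookrightarrow\prod_\sigma e_\sigma M=\prod_\sigma M_\sigma$ as $k$-spaces via the unit, hence $\dim_k M\le\sum_\sigma\dim_k M_\sigma<\infty$. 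Thus the equivalence restricts to $A(\Delta)\text{-}\mathrm{mod^{fd}}\simeq\Desc(\Delta)^{\mathrm{fd}}$.

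The main obstacle I anticipate is bookkeeping rather than conceptual: one must be scrupulous in matching the Grothendieck-construction conventions (which variance of the pseudofunctor, which direction the descent isomorphisms point) used in the cited form of the Bénabou–Roubaud theorem with the hand-written definition of $\Desc(\Delta)$, and in checking that the explicit $p_!, p^*$ of the bifibration $\mathcal E\to\mathcal B$ restrict correctly to the square appearing in the Beck–Chevalley argument. The identification of the triple-overlap cocycle condition with the stated formula, and the verification that $\phi^\sigma_\sigma=\mathrm{id}$ is automatic, are the places where a sign or an index can most easily go wrong, so I would write those unwindings out in full.
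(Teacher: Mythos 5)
Your proposal is correct and follows essentially the same route as the paper: verify that the idempotents $e_\sigma$ generate the unit ideal so that \autoref{monadicity} gives monadicity of $p^*$, invoke \autoref{base_change} for the Beck--Chevalley condition, apply the B\'enabou--Roubaud theorem to identify $A(\Delta)\text{-}\mathrm{mod}$ with descent data, and observe that $M$ is finite-dimensional iff $p^*M$ is. The only difference is that you spell out the translation between the abstract descent category of B\'enabou--Roubaud and the concrete $\Desc(\Delta)$, which the paper delegates to a remark citing the literature; this unwinding (and your clean observation $E_{\sigma,\sigma}=E_{\sigma,\sigma}e_\sigma$, which the paper leaves implicit) is a welcome addition rather than a deviation.
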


\begin{proof}
    We have a bifibration $P:\mathcal{E}\to\mathcal{B}$ satisfying the Beck-Chevalley condition at $p: \{e_\sigma\}_{\sigma\in\Delta}\to \ast$ by \autoref{base_change}. The ideal $(e_\sigma)_{\sigma\in\Delta}$ is the whole ring $A(\Delta)$, so by \autoref{monadicity} the functor $p^*$ is also monadic. The Bénabou-Roubaud theorem then asserts the existence of an equivalence of a category of descent data  and the category of algebras over the monad $p^*p_!$, which is in turn equivalent to $A(\Delta)\text{-}\mathrm{mod}$ by the monadicity of $p^*$.  A module $M$ over $A(\Delta)$ is finite-dimensional if and only if $p^*M$ is, so this equivalence restricts to an equivalence between $A(\Delta)\text{-}\mathrm{mod^{fd}}$ and $\Desc(\Delta)^{\mathrm{fd}}$.
    
\end{proof}

\begin{Remark}
    The definition of the category of descent data used above should be familiar to algebraic geometers. In the category theory literature (for example, in \cite{janelidze1994facets}), the definition of descent data in the Bénabou-Roubaud theorem is slightly different, and more strongly resembles the definition of an object with the action of a monad. For the proof of the Bénabou-Roubaud and the connection between the two definition of descent data, see \cite{kahn2024benabouroubaud}.

    \iffalse
    It only remains to check that our definition of the category $\Desc(\Delta)$ is the 'correct' one, i.e. coinciding with the one in the Bénabou-Roubaud theorem. This definition can be found in, for example, \cite{janelidze1994facets}. This is easy, and is completely analogous to the well-known case of descent data for sheaves, so we only outline the idea. The morphisms $\varphi_\sigma^\tau$ are the components of a morphism $\xi: \pi_1^* M_\Delta\to \pi_2^* M_\Delta$ in the notation of \autoref{monadicity}, and usually one defines the category of descent data using such morphisms as objects and requiring that they render certain diagrams commutative. These are the diagrams (16) and (18) of \cite{janelidze1994facets}, and in our case the diagram (18) simplifies to the cocycle condition, because we have strict equalities $g^*f^*=(fg)^*$ instead of composition isomoprhisms of the form $g^*f^*\cong (fg)^*$, and (16) simplifies to requiring $\phi_\sigma^\sigma=\id$ $\forall\sigma\in\Delta$, which also follows from the cocycle condition.
    \fi
\end{Remark}
\subsection{Equivalence of descent data}

Now that we have a descent theory for modules over $A(\Delta)$, we may proceed to show that the descent data in $\Desc(\Delta)$ are equivalent to descent data for perverse sheaves for the cover by the toric affine charts.

\begin{Proposition}\label{Prop_equiv_affine}
    Let $\sigma$ be a regular cone in $\mathbb{Q}^n$. For the corresponding smooth affine toric variety $X_{\sigma}$ there is an equivalence of categories $\Perv(X_{\sigma},k)$ and $A(\sigma)\text{-}\mathrm{mod^{fd}}.$
\end{Proposition}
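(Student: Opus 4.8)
The plan is to reduce both sides of the equivalence to the combinatorial ``hypercube'' description of perverse sheaves on a product of complex lines with a torus, and then to match the two resulting combinatorial categories. Let $d=\dim\sigma$. Since $\sigma$ is regular we may choose the basis $\varepsilon_1,\dots,\varepsilon_n$ of $N$ so that $\varepsilon_1,\dots,\varepsilon_d$ are exactly the primitive ray generators $v_1,\dots,v_d$ of $\sigma$; then $X_\sigma\cong\C^d\times(\C^\times)^{n-d}$, the faces of $\sigma$ are precisely the cones $\tau_I=\mathrm{cone}(\varepsilon_i:i\in I)$ for $I\subseteq\{1,\dots,d\}$ (so $q=2^d$), and the orbit stratification is the product of the coordinate normal-crossings stratification of $\C^d$ with the trivial stratification of $(\C^\times)^{n-d}$. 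By the description of perverse sheaves on varieties of this form due to Maisonobe, Galligo and Granger \cite{galligo1985mathcal} (the case $d=1$ is recalled in \autoref{Example_C1}), $\Perv(X_\sigma)$ is equivalent to the category $\mathcal{D}_\sigma$ whose objects are families $(V_I)_{I\subseteq\{1,\dots,d\}}$ of finite-dimensional $k$-vector spaces together with maps $u_i\colon V_I\to V_{I\cup\{i\}}$ and $v_i\colon V_{I\cup\{i\}}\to V_I$ for $i\notin I$ and automorphisms $T_j$ of each $V_I$ for $d<j\le n$, subject to the conditions that all squares commute, each $\id-v_iu_i$ is invertible, and all the monodromy automorphisms $\id-v_iu_i$ together with the $T_j$ commute pairwise.

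Next I would record the matching algebraic fact. Let $C=\{\begin{psmallmatrix}a&b\\(1-t)c&d\end{psmallmatrix}\in\Mat_2(k[t^{\pm1}])\}$ be the algebra of \autoref{Example_C1}. Indexing rows and columns of $\Mat_{2^d}$ by the faces $\tau_I$, the divisibility conditions cutting out $A(\sigma)$ are exactly those produced by tensoring matrix algebras: the $(\tau_I,\tau_J)$-entry of a matrix in $C^{\otimes d}\otimes_k k[t_{d+1}^{\pm1},\dots,t_n^{\pm1}]\subseteq\Mat_{2^d}(k[N])$ is divisible by $\prod_{i\in I\setminus J}(1-t_i)=\prod_{i:\,v_i\in\tau_I\setminus\tau_J}(1-v_i)$, and conversely, so that
\[
A(\sigma)\;\cong\;C^{\otimes d}\otimes_k k[t_{d+1}^{\pm1},\dots,t_n^{\pm1}].
\]
A finite-dimensional module over the right-hand side is a finite-dimensional $k$-vector space $M$ carrying $d$ pairwise commuting $C$-module structures and a commuting $k[\mathbb{Z}^{n-d}]$-action. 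By \autoref{Example_C1} a $C$-module structure is the same as a diagram $\Psi\rightleftarrows\Phi$ with $\id-vu$ invertible; feeding in the $d$ commuting pairs of idempotents of the $C$-factors decomposes $M=\bigoplus_{I\subseteq\{1,\dots,d\}}M_I$, the $i$-th factor supplies the maps $u_i,v_i$ between $M_I$ and $M_{I\cup\{i\}}$, commutation of the factors forces all squares and all monodromies $\id-v_iu_i$ to commute, and the remaining $k[\mathbb{Z}^{n-d}]$-action supplies the $n-d$ extra commuting automorphisms. Hence $A(\sigma)\text{-}\mathrm{mod^{fd}}\simeq\mathcal{D}_\sigma$, and composing with the equivalence of the previous paragraph proves the Proposition. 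The equivalence so produced is moreover visibly compatible with the idempotents $e_\tau=\sum_{\tau'\prec\tau}E_{\tau',\tau'}$ — that is, with restriction to the open toric subvarieties $X_\tau\subseteq X_\sigma$ for $\tau\prec\sigma$ — which is exactly what is needed to compare descent data in the following step.

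The only substantive input is the theorem of \cite{galligo1985mathcal}; the rest is bookkeeping with matrix blocks. The point that deserves a moment of care is that the factor $(\C^\times)^{n-d}$ contributes nothing beyond its fundamental group $\mathbb{Z}^{n-d}$: a perverse sheaf on $\C^d\times(\C^\times)^{n-d}$ constructible with respect to the product stratification is the same datum as a perverse sheaf on $\C^d$ equipped with $n-d$ commuting automorphisms. This is part of \cite{galligo1985mathcal}, but a self-contained argument can be given by pulling back along the covering $\C^d\times\C^{n-d}\to\C^d\times(\C^\times)^{n-d}$ and descending, or by applying Beilinson's gluing theorem one $\C^\times$-factor at a time. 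If one prefers to avoid citing \cite{galligo1985mathcal} altogether, one can instead write down the functor $\mathcal{D}_\sigma\to A(\sigma)\text{-}\mathrm{mod^{fd}}$ by hand: send $(V_I;u_i,v_i;T_j)$ to $\bigoplus_I V_I$, with $E_{\tau_I,\tau_I}$ acting as the projection onto $V_I$, the element $(1-t_i)E_{\tau_{I\cup\{i\}},\tau_I}$ acting as $u_i$, the element $E_{\tau_I,\tau_{I\cup\{i\}}}$ acting as $v_i$, and the central elements $t_j$ ($d<j\le n$) acting as $T_j$. The content is then that this defines a bona fide $A(\sigma)$-module structure — for which the main points are that $A(\sigma)$ is generated by the listed elements together with the central elements $t_i^{-1}\cdot 1$ for $i\le d$ (whose images are invertible on any object of $\mathcal{D}_\sigma$ by the diagram axioms), and that the defining divisibility conditions of $A(\sigma)$ translate precisely into the commuting-square and invertibility axioms of $\mathcal{D}_\sigma$ — and that the resulting functor is an equivalence; all of this is immediate from the matrix form but somewhat tedious to write out, which is why routing the argument through \cite{galligo1985mathcal} is preferable.
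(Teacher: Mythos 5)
Your proof is correct, and it is essentially the alternative route that the paper itself mentions but does not carry out. The paper's own proof factors $X_\sigma$ as a product of copies of $\mathbb{C}$ and $\mathbb{C}^\times$ and invokes Lyubashenko's theorem \cite{lyubashenko2001exterior} identifying $\Perv$ of a product with the Deligne tensor product of the factors, so that the only geometric input is the one-dimensional case of \autoref{Example_C1}; the identification of $A(\Delta_1)\otimes_k\cdots\otimes_k A(\Delta_n)$ with $A(\sigma)$ via the Kronecker product is the same bookkeeping as your decomposition $A(\sigma)\cong C^{\otimes d}\otimes_k k[t_{d+1}^{\pm1},\ldots,t_n^{\pm1}]$. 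You instead take the full multidimensional hypercube description of Maisonobe--Galligo--Granger \cite{galligo1985mathcal} as the geometric input and match it with $A(\sigma)$-modules directly; the paper explicitly notes this as a legitimate variant, and your explicit functor $(V_I;u_i,v_i;T_j)\mapsto\bigoplus_I V_I$ is a useful concretization of it. The paper's version is more modular (one-dimensional case plus a general external-product theorem, which also fits the way products are used again in the equivariant section), while yours leans on the stronger MGG classification but makes the quiver data visible. One further difference to keep in mind: you normalize by choosing the basis of $N$ adapted to $\sigma$, whereas the paper fixes a single basis and reduces an arbitrary regular cone to the standard one by a lattice automorphism $\beta_\sigma$, inducing $\gamma_\sigma: A(\sigma)\to A(\Lambda_k)$. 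For the Proposition in isolation either choice is fine, but your closing claim that compatibility with the idempotents $e_\tau$ (i.e.\ with restriction to faces) is ``visible'' is a little quick: the equivalences you build for $\sigma$ and for a face $\tau$ use different adapted bases, so comparing them still requires tracking the change of basis $\beta_\sigma\beta_\tau^{-1}$ --- this is exactly the verification carried out in the paper's proof of \autoref{Theorem_perverse_modules}, and it does not come for free from your construction.
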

\begin{proof}
    Fix a basis $\varepsilon_1,\ldots,\varepsilon_n$ of $\mathbb{Q}^n$ consisting of lattice elements. First, let's deal with the case when $\sigma$ is generated by the first $k$ vectors of the basis $\varepsilon_1,\ldots,\varepsilon_k.$ The corresponding fan can be viewed as the product of fans $\Delta_1,\ldots,\Delta_n$, where we consider $\Delta_i$ as being embedded into the linear span of $\varepsilon_i$ and consisting of two cones, the zero cone and the ray generated by $\varepsilon_i$, for $i\leqslant k$, and consisting of just the zero cone for $i> k$.
    
    The affine variety $X_{\sigma}$ is then isomorphic to the product $\prod\limits_{i=1}^n X_{\Delta_i}$, with each factor being either $\mathbb{C}$ stratified by 0 and its complement $\mathbb{C}^\times$, or simply $\mathbb{C}^\times$. We know what the categories of perverse sheaves on each factor are, and by a result of Lyubashenko \cite{lyubashenko2001exterior}, the category of perverse sheaves on a product of varieties with the stratification by products of strata may be identified with the Deligne tensor product of the categories of perverse sheaves on factors:
        $$ \Perv_\Sigma(X,k)\boxtimes \Perv_\Theta(Y,k)\xrightarrow[]{\sim}\Perv_{\Sigma\times\Theta}(X\times Y,k),$$ with the equivalence induced by universal property from the external tensor product functor $-\boxtimes -$.  For Deligne tensor products and their existence for categories with possibly infinitely many simple objects we refer to \cite{franco2013tensor}. For a pair of $k$-algebras $A,B$ the Deligne tensor product $A\text{-}\mathrm{mod^{fd}}\boxtimes B\text{-}\mathrm{mod^{fd}}$ may be identified with the category $A\otimes_k B\text{-}\mathrm{mod^{fd}}$. In our situation, this gives us an identification 
        $$\Perv(X_{\sigma},k)\cong A(\Delta_1)\otimes_k\ldots\otimes_k A(\Delta_n)\text{-}\mathrm{mod^{fd}}.$$
        If we think of all the algebras on the right side as of matrix algebras, then the right side is identified with  $A(\sigma)$ by the Kronecker product.

Alternatively, in the previous step one could avoid talking about Deligne tensor product and instead use the description of perverse sheaves on $\mathbb{C}^k\times(\mathbb{C}^\times)^{n-k}$ of Granger, Galligo and Maisonobe \cite{galligo1985mathcal}, identify it with modules over an algebra and explicitly construct an isomorphism with the algebra $A(\sigma)$ similarly to the case of perverse sheaves on $\mathbb{C}$ in \autoref{Example_C1}.

For an arbitrary nonsingular cone $\sigma$, fix a set of its integral generators $v_1,\ldots,v_k$. Since the cone is nonsingular, this set can be completed to a basis $v_1,\ldots,v_n$. The automorphism $\beta_\sigma$ of the lattice $N$ mapping $v_i$ to $\varepsilon_i$ maps the cone $\sigma$ onto the cone spanned by the first $k$ vectors $\varepsilon_i,\ldots, \varepsilon_k$, and hence induces an isomorphism $\phi_\sigma:\ X_{\sigma} \xrightarrow{\sim} X_{\Lambda_k},$ where $\Lambda_k$ is the fan consisting of the cone generated by the first $k$ basis vectors and its faces. It induces an equivalence of categories $\phi_{\sigma*}: \Perv(X_\sigma,k)\to\Perv(X_{\Lambda_k},k).$ The latter category is equivalent to $A(\Lambda_k)\text{-}\mathrm{mod^{fd}}$ by the equivalence constructed above.

As matrix algebras over $k[N]$, $A(\Lambda_k)$ consists of matrices $M$ with rows and columns indexed by cones in $\Lambda_k$ such that each component $M_{\omega,\tau}$ is divisible by $\prod\limits_{i:\ \varepsilon_i\in\omega\setminus\tau}(1-\varepsilon_i),$ while $A(\sigma)$ consists of matrices $Q$ with rows and columns indexed by cones in $\sigma$ such that each component $Q_{\omega,\tau}$ is divisible by $\prod\limits_{i:\ v_i\in\omega\setminus\tau}(1-v_i).$ We can use $\beta_\sigma$ to define an isomorphism $\gamma_\sigma: A(\sigma)\to A(\Lambda_k)$. Explicitly, we apply the extension of $\beta_\sigma$ to $k[N]$ to each matrix component $M_{\omega,\tau}$ and placing it into row $\beta_{\sigma}(\omega)$ and column $\beta_{\sigma}(\tau)$.

All of the above combined gives us the desired equivalence

$$\Perv(X_{\sigma},k)\xrightarrow{\sim}\Perv(X_{\Lambda_k},k)\xrightarrow{\sim}A(\Lambda_k)\text{-}\mathrm{mod^{fd}}\xrightarrow{\sim}A(\sigma)\text{-}\mathrm{mod^{fd}}.$$

\end{proof}

The equivalence in the previous proof involves some choices, in particular, the choice of the completion of the set of generators of a nonsingular cone to a basis. We will make sure that regardless of the choices made, such equivalences will behave nicely with regards to descent.

\begin{proof}[Proof of \autoref{Theorem_perverse_modules}]
    Our aim is to construct an equivalence $\Perv(X_\Delta)\cong A(\Delta)\text{-}\mathrm{mod^{fd}}.$ The latter category is equivalent to the category of descent data $\Desc(\Delta)$ by \autoref{Prop_descent_for_A-mod}. The category $\Perv(X_\Delta)$, by smooth descent for perverse sheaves, is also equivalent to the category of descent data with respect to the cover by affine toric varieties, which we denote by $\Desc^p(\Delta).$ Its objects are collections of perverse sheaves on the elements of the cover with gluing isomorphisms satisfying the cocycle condition. On each affine open corresponding to a cone $\sigma$, we have already constructed an equivalence between $\Perv_\Sigma(X_{\sigma})$ and $A(\sigma)\text{-}\mathrm{mod^{fd}},$ and to extend it to an equivalence $\Desc(\Delta)\xrightarrow{\sim}\Desc^p(\Delta)$ it remains to establish that these equivalences on open charts commute with the restriction functors.

For each cone $\sigma$ of $\Delta$ fix a basis containing the set of integral generators of the cone, a lattice automorphism $\beta_\sigma$ mapping the cone to the cone generated by the coordinate vectors, and use this automorphism to identify perverse sheaves over the corresponding chart with modules by the construction of \autoref{Prop_equiv_affine}. Let $\tau$ and $\sigma$ be cones of dimensions $s$ and $k$ respectively such that $\tau\prec\sigma$ (in particular, $\tau$ may equal $\sigma$).   Let $j_{\tau,\sigma}$ be the inclusion of $X_{\tau}$ into $X_{\sigma}$. Let us show that there are functors $F$ and $G$ such that the following diagram, with the rows as in \autoref{Prop_equiv_affine}, commutes% % % https://q.uiver.app/#q=WzAsOCxbMCwwLCJcXFBlcnYoWF97XFxiYXJ7XFxzaWdtYX19KSJdLFs0LDAsIkEoXFxMYW1iZGFfaylcXHRleHR7LX1cXG1hdGhybXttb2Ree2ZkfX0iXSxbNCwyLCJBKFxcTGFtYmRhX3MpXFx0ZXh0ey19XFxtYXRocm17bW9kXntmZH19Il0sWzAsMiwiXFxQZXJ2KFhfe1xcYmFye1xcdGF1fX0pIl0sWzYsMCwiQShcXGJhcntcXHNpZ21hfSlcXHRleHR7LX1cXG1hdGhybXttb2Ree2ZkfX0iXSxbNiwyLCJBKFxcYmFye1xcdGF1fSlcXHRleHR7LX1cXG1hdGhybXttb2Ree2ZkfX0iXSxbMiwwLCJcXFBlcnYoWF97XFxMYW1iZGFfa30pIl0sWzIsMiwiXFxQZXJ2KFhfe1xcTGFtYmRhX3N9KSJdLFsxLDIsImUoLSkiXSxbMSw0LCJcXHNpbSJdLFsyLDUsIlxcc2ltIl0sWzQsNSwiZV9cXHRhdSgtKSJdLFszLDcsIlxcc2ltIl0sWzAsNiwiXFxzaW0iXSxbNywyLCJcXHNpbSJdLFs2LDEsIlxcc2ltIl0sWzYsNywial4qX3tzLGt9Il0sWzAsMywial97XFx0YXUsXFxzaWdtYX1eKiJdXQ==
\[\begin{tikzcd}
	{\Perv(X_{\bar{\sigma}})} && {\Perv(X_{\Lambda_k})} && {A(\Lambda_k)\text{-}\mathrm{mod^{fd}}} && {A(\bar{\sigma})\text{-}\mathrm{mod^{fd}}} \\
	\\
	{\Perv(X_{\bar{\tau}})} && {\Perv(X_{\Lambda_s})} && {A(\Lambda_s)\text{-}\mathrm{mod^{fd}}} && {A(\bar{\tau})\text{-}\mathrm{mod^{fd}}}
	\arrow["\sim", from=1-1, to=1-3]
	\arrow["{j_{\tau,\sigma}^*}", from=1-1, to=3-1]
	\arrow["\sim", from=1-3, to=1-5]
	\arrow["F", from=1-3, to=3-3]
	\arrow["\sim", from=1-5, to=1-7]
	\arrow["G", from=1-5, to=3-5]
	\arrow["{e_\tau(-)}", from=1-7, to=3-7]
	\arrow["\sim", from=3-1, to=3-3]
	\arrow["\sim", from=3-3, to=3-5]
	\arrow["\sim", from=3-5, to=3-7]
\end{tikzcd}\]

 For the left square to commute, $F$ must be the inverse image functor $f_{\tau,\sigma}^*$ for the map $f_{\tau,\sigma}: X_{\Lambda_s}\to X_{\Lambda_k}$ being induced by the automorphism $\beta_\sigma \beta_\tau^{-1}$ of the lattice $N$. In the right square, we may view the restriction functor $e_\tau(-)$ as the inverse image functor for the inclusion morphism $A(\tau)\cong e_\tau A(\sigma)e_\tau\hookrightarrow A(\sigma)$. For the right square to commute, $G$ must then be the inverse image functor $g_{\tau,\sigma}^*$ with the map $g_{\tau,\sigma}: A(\Lambda_s)\to A(\Lambda_k)$ equal to the composite  

$$A(\Lambda_s)\xrightarrow{\gamma_\tau^{-1}}A(\tau)\hookrightarrow A(\sigma)\xrightarrow{\gamma_\sigma} A(\Lambda_k).$$  Let $e_{s,k}$ be the idempotent in $A(\Lambda_k)$ corresponding to the inclusion of fans $\Lambda_s\hookrightarrow \Lambda_k$.Possibly up to reordering the rows and columns, we may then rewrite this composite as $$A(\Lambda_s)\xrightarrow{\sim}A(\Lambda_s)\hookrightarrow A(\Lambda_k),$$ and, up to the same reordering of the coordinates, rewrite the morphism $f_{\tau,\sigma}$ as the composite 
$$X_{\Lambda_s}\xrightarrow{\sim}X_{\Lambda_s}\hookrightarrow X_{\Lambda_k},$$ with both of the left arrows induced by $\beta_\sigma \beta_\tau^{-1}$ as in \autoref{Prop_equiv_affine}

Writing $F$ and $G$ as composites allows us to reduce the commutativity of the resulting middle square to the commutativity of the two squares below, one corresponding to restriction to an open toric chart of $X_{\Lambda_k}$, the other to then pulling back along an automorphism:

% https://q.uiver.app/#q=WzAsOCxbMCwwLCJcXFBlcnYoWF97XFxMYW1iZGFfa30pIl0sWzIsMCwiQShcXExhbWJkYV9rKVxcdGV4dHstfVxcbWF0aHJte21vZF57ZmR9fSJdLFswLDIsIlxcUGVydihYX3tcXExhbWJkYV9zfSkiXSxbMiwyLCJBKFxcTGFtYmRhX3MpXFx0ZXh0ey19XFxtYXRocm17bW9kXntmZH19Il0sWzQsMCwiXFxQZXJ2KFhfe1xcTGFtYmRhX3N9KSJdLFs0LDIsIlxcUGVydihYX3tcXExhbWJkYV9zfSkiXSxbNiwyLCJBKFxcTGFtYmRhX3MpXFx0ZXh0ey19XFxtYXRocm17bW9kXntmZH19Il0sWzYsMCwiQShcXExhbWJkYV9zKVxcdGV4dHstfVxcbWF0aHJte21vZF57ZmR9fSJdLFsyLDMsIlxcc2ltIl0sWzEsMywiZV9zXmsoLSkiXSxbMCwyLCJqX3tzLGt9XioiLDJdLFswLDEsIlxcc2ltIl0sWzQsNywiXFxzaW0iXSxbNSw2LCJcXHNpbSJdLFs0LDUsImFfe1xcdGF1LFxcc2lnbWF9XioiLDJdLFs3LDYsImJfe1xcdGF1LFxcc2lnbWF9XioiXV0=
\[\begin{tikzcd}
	{\Perv(X_{\Lambda_k})} && {A(\Lambda_k)\text{-}\mathrm{mod^{fd}}} && {\Perv(X_{\Lambda_s})} && {A(\Lambda_s)\text{-}\mathrm{mod^{fd}}} \\
	\\
	{\Perv(X_{\Lambda_s})} && {A(\Lambda_s)\text{-}\mathrm{mod^{fd}}} && {\Perv(X_{\Lambda_s})} && {A(\Lambda_s)\text{-}\mathrm{mod^{fd}}}
	\arrow["\sim", from=1-1, to=1-3]
	\arrow["{j_{s,k}^*}"', from=1-1, to=3-1]
	\arrow["{e_{s,k}(-)}", from=1-3, to=3-3]
	\arrow["\sim", from=1-5, to=1-7]
	\arrow["{a_{\tau,\sigma}^*}"', from=1-5, to=3-5]
	\arrow["{b_{\tau,\sigma}^*}", from=1-7, to=3-7]
	\arrow["\sim", from=3-1, to=3-3]
	\arrow["\sim", from=3-5, to=3-7]
\end{tikzcd}\]

The commutativity of the left square is evident, because we basically have already seen that this diagram commutes in the case when $X_{\Lambda_k}=\mathbb{C}$ and $X_{\Lambda_k}=\mathbb{C}^\times$, and the more general case follows from this by taking products. In the right square, the two vertical arrows are inverse image functors along an automorphism of $X_{\Lambda_s}$ and an automorphism of $A(\Lambda_s)$, both induced by the same automorphism $\beta_\sigma \beta_\tau^{-1}$ of $N$. This square commutes, because the top arrow in the square uses $\beta_\sigma$ for its construction, the bottom arrow uses $\beta_\tau$, so they differ precisely by a pullback along $\beta_\sigma \beta_\tau^{-1}$.

\end{proof} 

\section{Equivariant perverse sheaves}

In the previous section we identified the category of perverse sheaves on a toric variety $X_\Delta$ with the category of finite-dimensional modules over an algebra $A(\Delta)$. The center of the algebra $A(\Delta)$ is naturally identified with the group algebra $k[\pi_1(T)].$ Let $G$ be a closed algebraic subgroup of $T$. Our goal now is to obtain an algebraic description of the category $\Perv_G(X_\Delta)$ of $G$-equivariant perverse sheaves (but which are still constructible with respect to the stratification by $T$-orbits).

With this in mind, let us consider the short exact sequence 

$$1\to G\to T\to T/G\to 1.$$ The algebraic group $T/G$ is also an algebraic torus of dimension $\dim T-\dim G.$ There is an induced map $k[\pi_1(T)]\to k[\pi_1(T/G)]$ which we use to define $$A_G(\Delta)=A(\Delta)\underset{k[\pi_1(T)]}{\otimes}k[\pi_1(T/G)].$$

The claim is that the data of a $G$-equivariant perverse sheaf on $X_\Delta$ is equivalent to the data of a finite-dimensional module over $A_G(\Delta)$. Let us briefly recall the constructions of $\Phi_f$ and $\Psi_f$, following Kashiwara and Schapira \cite{kashiwara2002sheaves}.

Let $f: X\to \mathbb{C}$ be a holomorphic map and $p:\mathbb{C}\to\mathbb{C}$ the exponential map realizing $\mathbb{C}$ as the universal cover of $\mathbb{C}^\times$. Let $\mathrm{tr}$ be the trace morphism $p_!k_{\mathbb{C}}\cong p_!p^!k_{\mathbb{C}}\to k_{\mathbb{C}}$ and define $K$ as the complex

$$0\to p_!p^!k_{\mathbb{C}}\xrightarrow{\mathrm{tr}}k_{\mathbb{C}}\to 0$$ with $k_{\mathbb{C}}$ in degree $0$. Then, by definition (up to a shift), we have

$$\Psi_f(F)=i^*R\sheafhom(f^*p_!k_{\C},F),\quad \Phi_f(F)=i^*R\sheafhom(f^*K,F).$$ The monodromy operators are then defined by the action of the endomporphism $z\mapsto z+2\pi i$ of $\C$  on the sheaf $k_\C$, which induces automoprhisms of both $p_! k_\C$ and $K$.

Before starting the proof, let us deal with a few examples

\begin{Example}\label{Example_discrete}
    Let $X_\Delta=\mathbb{C}$ with the coordinate function $z$ and $G=\mathbb{Z}/p\mathbb{Z}\subset \mathbb{C}^\times$. Let $\xi=e^{\frac{2\pi i}{p}}$ be the generator of $G$.
The spaces $\Psi$ and $\Phi$ in the quiver description of perverse sheaves are obtained by applying the vanishing cycle and nearby cycles functors $\Phi_f$ and $\Psi_f$ for $f=z$. Let $t$ denote the monodromy automorphism defined by the maps $\id_\Psi-uv$ and $\id_\Phi-vu$. 

There are the action and projection maps $a,\ pr_2: G\times X_\Delta\to X_\Delta$, and the equivariant structure on a perverse sheaf $F$ is given by an isomorphism of perverse sheaves (shifted by $[\dim G]$) $\theta: pr_2^* F\to a^* F$. The space $G\times X_\Delta$ in this case consists of $p$ disjoint copies of $X_\Delta$ indexed by the elements of $G$, and the additional axioms that must be satisfied by $\theta$ imply that $\theta$ is uniquely determined by its restriction to the component corresponding to the generator $\xi$ of $G$. Denote this restriction by $\theta_\xi: F\to m_\xi^* F$, where $m_\xi$ is the multiplication by $\xi$.

If we were working with equivariant local systems on $\mathbb{C}^\times$, the composition of going $1/p$-th of the way around 0 and the isomorphism given by the equivariant structure would give us a $p$-th root of the monodromy $t$. We shall see that the same thing happens with perverse sheaves.

According to Exercise 8.15 of \cite{kashiwara2002sheaves}, $$\Psi_z(m_\xi^* F)\cong\Psi_z((m_{\xi^{-1}})_* F)\cong (m_{\xi^{-1}})_*\Psi_{\xi^{-1}z}(F)\cong \Psi_{\xi^{-1}z}(F),$$ where the last isomorphism holds because multiplication acts trivially on 0, the support of the nearby cycles sheaf, and similarly $\Phi_z(m_\xi^* F)\cong \Phi_{\xi^{-1}z}(F)$. Explicitly, we have $\Psi_{\xi^{-1}z}(F)\cong i^*R\sheafhom(f^*p_!\tau(\frac{-2\pi i}{p})_!k_{\C},F)$, where $\tau(\frac{-2\pi i}{p})$ is the map $z\mapsto z-\frac{-2\pi i}{p}$. By identifying constant sheaves on $\C$ with their stalks at 0 and considering the map $(k_\C)_{0}\to(k_\C)_{\frac{2pi}{p}}$ given by moving along the line segment $[0,\frac{2pi}{p}]$, we get a map of sheaves $k_{\C}\to \tau(\frac{-2\pi i}{p})_!k_\C.$ By the contravariance of $\sheafhom$ in the first argument, we get an isomorphism $\gamma_\Psi: \Psi_{\xi^{-1}z}(F)\to \Psi_{z}(F)$. In the same manner we get an isomorphism $\gamma_\Phi: \Phi_{\xi^{-1}z}(F)\to \Phi_{z}(F)$.

By functoriality, these two morphisms will commute with the canonical maps between nearby and vanishing cycles, and hence give the isomorphism  $\gamma: m_\xi^* F\to F.$ Unwinding the axioms of equivariant structures further, we find that $\theta_\xi$ defines an equivariant structure on $F$ precisely when $(\gamma\circ \theta_\xi)^p=t.$ Therefore, an equivariant structure on a perverse sheaf in this case is just a choice of a $p$-th root of the monodromy.

After choosing coordinates, the map $T\to T/G$ is just the map $z\to z^p$, and the induced map $k[\pi_1(T)\to k[\pi_1(T/G)]$ is just the map $k[t,t^{-1}]\to k[s,s^{-1}]$ taking $t$ to $s^p$. Therefore,

$$\Perv_G(X_\Delta)\cong A_G(\Delta)\text{-}\mathrm{mod}^{\mathrm{fd}}=A(\Delta)\underset{k[t,t^{-1}]}{\otimes}k[s,s^{-1}]\text{-}\mathrm{mod}^{\mathrm{fd}}.$$
\end{Example}
\begin{Example}\label{Example_connected}

Now we consider the case $X_\Delta=\mathbb{C}$ and $G=\mathbb{C}^\times.$ The space $G\times X_\Delta$ is also a smooth toric variety, and a perverse sheaf $H$ on $G\times X_\Delta$ is given by a diagram 

\[\begin{tikzcd}
	\Psi && \Phi
	\arrow["v", curve={height=-18pt}, from=1-3, to=1-1]
	\arrow["u", curve={height=-18pt}, from=1-1, to=1-3]
\end{tikzcd}\] with an additional automorphism $s$ corresponding to traveling along the loop around 0 in $G$. For $H=pr_2^* F[1]$, this automorphism is clearly the identity. The claim is that for $H=a^* F[1]$, this automorphism is exactly $t$. This is easy for the component of $s$ acting on $\Psi(H)$, since in this case, taken with the monodromy action $t$, it is just the local system $F|_{\mathbb{\C}^\times}$. For the component of $s$ acting on $\Phi(H)$, let $u,z$ be the coordinates on $\C^\times \times \C$ and use the formula $\Phi_z(H)=(R\Gamma_{\{\mathrm{Re }\ z\geqslant0\}}H)|_{z=0}$ (Exercise 8.13 of  \cite{kashiwara2002sheaves}). The stalk of this at the point $(1,1)$ is clearly isomorphic to $\Phi_z(F)$, while going around the loop around $0$ in $G=\C^\times$ corresponds to turning the halfplane $\mathrm{Re }\ z\geqslant 0$ once around the origin in the base space, which gives precisely the action of $t$ on the vanishing cycles of the original perverse sheaf $F$.

All in all, the isomorphism $\theta$ is the same as an automorphism, also denoted $\theta$, of the above diagram such that $\theta=t\theta$. Such an automorphism exists if and only if $t=\id$. We find that a $\mathbb{C}^\times$-equivariant perverse sheaf on $\mathbb{C}$ is given by a diagram \[\begin{tikzcd}
	\Psi && \Phi
	\arrow["v", curve={height=-18pt}, from=1-3, to=1-1]
	\arrow["u", curve={height=-18pt}, from=1-1, to=1-3]
\end{tikzcd}\] such that $uv=0$ and $vu=0$. 

The quotient torus $T/G$ is the trivial group, and we find $$A_G(\Delta)=A(\Delta)\underset{k[t,t^{-1}]}{\otimes}k,$$ as expected.
\end{Example}
\begin{Theorem}\label{Theorem_equivariant_perverse_modules}
    The category $\Perv_G(X_\Delta)$ is equivalent to the category $A_G(\Delta)-\mathrm{mod^{fd}}$ of finite-dimensional left $A_G(\Delta)$-modules.
\end{Theorem}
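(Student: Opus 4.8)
The plan is to mimic the non-equivariant proof, replacing $\Perv(X_\Delta)$ with $\Perv_G(X_\Delta)$ and $A(\Delta)$ with $A_G(\Delta)$ throughout, using descent along the affine toric cover. The key point is that $G$-equivariance is a local (descent-type) condition: a $G$-equivariant perverse sheaf on $X_\Delta$ is a collection of $G$-equivariant perverse sheaves on the affine charts $X_\sigma$ together with gluing isomorphisms compatible with the equivariant structures, satisfying the cocycle condition. Since each $X_\sigma$ is $G$-stable (the orbit stratification is preserved by the $T$-action, hence by $G$), smooth descent for equivariant perverse sheaves applies, and $\Perv_G(X_\Delta)$ is equivalent to a category $\Desc^p_G(\Delta)$ of equivariant descent data for the affine cover. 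So the whole argument reduces to (i) an affine statement $\Perv_G(X_\sigma)\cong A_G(\sigma)\text{-}\mathrm{mod^{fd}}$ and (ii) a compatibility-with-restriction statement completely parallel to the one proved for \autoref{Theorem_perverse_modules}, together with (iii) a purely algebraic descent statement $A_G(\Delta)\text{-}\mathrm{mod}\cong\Desc(A_G(\Delta))$.

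First I would establish the algebraic descent statement. Note $A_G(\Delta)=A(\Delta)\otimes_{k[\pi_1 T]}k[\pi_1(T/G)]$ is again a $q\times q$ matrix algebra, now over the ring $k[\pi_1(T/G)]$ (after base change the divisibility conditions are imposed by the images $\overline{v_i}$ of the ray generators under $N\twoheadrightarrow N/(\text{cocharacters of }G)$), so it carries exactly the same system of idempotents $e_\sigma=\sum_{\tau\prec\sigma}E_{\tau,\tau}$ with $e_{\sigma\cap\tau}=e_\sigma e_\tau$. The proofs of \autoref{monadicity} (the ideal $(e_\sigma)$ is the whole ring, $p^*$ exact and conservative) and \autoref{base_change} (the multiplication map $\mu\colon e_\sigma A e_{\sigma\cap\tau}\otimes_{A_{\sigma\cap\tau}}e_{\sigma\cap\tau}Ae_\tau\to e_\sigma Ae_\tau$ is an isomorphism, with explicit inverse $\delta$) are purely formal manipulations with matrix units that go through verbatim with $A_G$ in place of $A$; indeed $e_\sigma A_G(\Delta)e_\sigma\cong A_G(\sigma)$ in the obvious way. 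Hence \autoref{Prop_descent_for_A-mod} holds with $A_G$: $A_G(\Delta)\text{-}\mathrm{mod^{fd}}\cong\Desc(A_G(\Delta))^{\mathrm{fd}}$.

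Next comes the affine equivalence $\Perv_G(X_\sigma)\cong A_G(\sigma)\text{-}\mathrm{mod^{fd}}$, which is the genuinely new input. As in \autoref{Prop_equiv_affine} we choose a basis adapting the cone, reducing to $X_\sigma=\mathbb{C}^k\times(\mathbb{C}^\times)^{n-k}$. I would first treat the rank-one factors by the analysis in Examples~\ref{Example_discrete} and \ref{Example_connected}: decomposing $G$ into its connected component (a subtorus) and component group (finite, contained in the torus), the equivariant structure on a perverse sheaf on $\mathbb{C}$ is, by the nearby/vanishing cycle computations recalled there (using $\Psi_z(m_\xi^*F)\cong\Psi_{\xi^{-1}z}(F)$ and the explicit formulas for $\Phi_z$), exactly the extra data of a compatible root/triviality of the monodromy $t$ — precisely the data encoded by a module over $k[t^{\pm1}]\otimes_{k[t^{\pm1}]}k[\pi_1(\mathbb{C}^\times/G)]$. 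Then for the general affine chart I would invoke a Künneth/Deligne-tensor-product statement for equivariant perverse sheaves: $\Perv_{G_1\times G_2}(X\times Y)\cong\Perv_{G_1}(X)\boxtimes\Perv_{G_2}(Y)$ for product stratifications, reducing $\Perv_G(X_\sigma)$ to a tensor product of the rank-one answers and of $\Perv_{\mathbb{C}^\times}(\mathbb{C}^\times)$-type factors; identifying this tensor product of module categories with $A_G(\sigma)\text{-}\mathrm{mod^{fd}}$ via Kronecker product is then the same bookkeeping as before. (If a clean equivariant Künneth is awkward to cite, the fallback — as the author already suggests in the non-equivariant case — is to describe $\Perv_G(\mathbb{C}^k\times(\mathbb{C}^\times)^{n-k})$ directly by adding the root-of-monodromy data to the Galligo–Granger–Maisonobe quiver and building the isomorphism to $A_G(\sigma)$ by hand.) I expect this affine identification — specifically, getting the equivariant Künneth formula and pinning down exactly which monodromy data the equivariant structure contributes on each factor — to be the main obstacle; everything else is transport of structure.

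Finally, with the affine equivalences in hand, compatibility with the restriction functors $j_{\tau,\sigma}^*$ is proved exactly as in the proof of \autoref{Theorem_perverse_modules}: the equivalences are built from a choice of cone-adapting lattice automorphism $\beta_\sigma$, the restriction on the module side is $e_\tau(-)$, and one factors the comparison square into (a) a restriction-to-an-open-toric-chart square, which commutes because it does so for the $\mathbb{C}$ and $\mathbb{C}^\times$ factors and hence for products, and (b) a pullback-along-$\beta_\sigma\beta_\tau^{-1}$ square, which commutes because the top and bottom equivalences differ precisely by that pullback. All of these squares are compatible with equivariant structures since the equivariant data is transported functorially through each step. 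This assembles into an equivalence $\Desc(A_G(\Delta))\xrightarrow{\sim}\Desc^p_G(\Delta)$, and composing with the two equivalences above yields $A_G(\Delta)\text{-}\mathrm{mod^{fd}}\cong\Perv_G(X_\Delta)$, as claimed. \qed
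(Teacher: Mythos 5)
The central gap is in your affine step, which is where the real content of the theorem lies. You propose to compute $\Perv_G(X_\sigma)$ by an equivariant K\"unneth formula $\Perv_{G_1\times G_2}(X\times Y)\cong\Perv_{G_1}(X)\boxtimes\Perv_{G_2}(Y)$, reducing to rank-one factors; but this presupposes that $G$ splits as a product of subgroups of the coordinate circles \emph{in the same coordinates} that identify $X_\sigma$ with $\mathbb{C}^k\times(\mathbb{C}^\times)^{n-k}$. A closed subgroup of $T$ does split this way in suitable (Smith normal form) coordinates, but those are adapted to $G$, not to the cone $\sigma$, and the two choices are incompatible in general: already for $X_\Delta=\mathbb{C}^2$ with $G$ the antidiagonal $\{(z,z^{-1})\}$, or a diagonally embedded $\mathbb{Z}/p\mathbb{Z}$, there is no decomposition $G=G_1\times G_2$ along the chart factors, so the K\"unneth reduction does not apply. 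Your fallback (``add root-of-monodromy data to the Galligo--Granger--Maisonobe quiver'') assumes exactly what has to be proved, namely \emph{which} extra data an equivariant structure amounts to for a non-split $G$. The paper's proof is structured differently precisely to handle this: it does not redo the descent machinery, but takes the nonequivariant equivalence of \autoref{Theorem_perverse_modules} as given, chooses coordinates adapted to $G$ (so $T\to T/G$ is $z_i\mapsto u_i^{d_i}$), and shows via the nearby/vanishing-cycle computations of \autoref{Example_discrete} and \autoref{Example_connected} that an equivariant structure on $F$ is exactly: the coordinate loops of the connected part $G_0$ act trivially, plus a choice of a commuting $d_i$-th root $s_i$ of each remaining central monodromy $t_i$. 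That is literally an extension of the central $k[\pi_1(T)]$-action to $k[\pi_1(T/G)]$, i.e.\ an $A_G(\Delta)$-module structure, and gluing over the charts only requires the chosen roots to agree, so no equivariant descent formalism is needed.

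A secondary but concrete error: your claim that $A_G(\Delta)$ is again the divisibility-defined matrix algebra over $k[\pi_1(T/G)]$, with conditions imposed by the images $\overline{v_i}$, fails whenever a ray generator $v_i$ is a cocharacter of $G_0$, i.e.\ $\overline{v_i}=1$, since then $1-\overline{v_i}=0$. The paper's \autoref{Example_connected} ($X=\mathbb{C}$, $G=\mathbb{C}^\times$) is a counterexample: $A_G(\Delta)=A(\Delta)\otimes_{k[t^{\pm1}]}k$ is four-dimensional, with $u,v\neq0$ and $uv=vu=0$, whereas the divisibility description inside $\Mat_2(k)$ would give the three-dimensional upper-triangular algebra; the natural map $A_G(\Delta)\to\Mat_q(k[\pi_1(T/G)])$ is not injective in such cases. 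This part of your argument is repairable — $A(\Delta)$ and the bimodules $e_\sigma A(\Delta)e_\tau$ are free over the central $k[\pi_1(T)]$, so the analogues of \autoref{monadicity} and \autoref{base_change} for $A_G(\Delta)$ follow by base change along $k[\pi_1(T)]\to k[\pi_1(T/G)]$ rather than by rerunning the matrix computation ``verbatim'' — but as written the justification is wrong, and fixing it still leaves the affine equivariant identification, the genuinely new input, unproved.
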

\begin{proof}
    Consider the sequence $$0\to G\to T\to T/G\to 0.$$ There exists a choice of coordinates $z_1,\ldots, z_n$ on $T$ and $u_1,\ldots,u_m$ on $T/G$ such that the second map has the form $z_i\mapsto u_i^{d_i}$, $d_i\geqslant1$, for $i\leqslant m$, and $t_i\mapsto 1$ for $i>m$. In these coordinates, $G$ is the product of discrete groups $\mathbb{Z}/d_i\mathbb{Z}$ generated by $\xi_i=e^{\frac{2\pi i}{d_i}}$ and embedded into the $i-$th factor of $T=(\mathbb{C}^\times)^n$ and a connected group $G_0=(\mathbb{C}^\times)^{n-m}$ embedded into $T$ as the last $(n-m)$ factors. Let $t_i$ be the loop around 0 in the $i$-th factor of $T$.

    We can cover the smooth variety $X_\Delta$ by affine toric charts and consider the equivariant structure on each such chart. For the time being, we assume that $X=X_\Delta$ is affine. The description of a perverse sheaf on $X$ as a diagram of vector spaces is obtained by repeatedly applying the functors $\Psi_f$ and $\Phi_f$ for various $f$ which cut out the torus-invariant divisors. In fact, the result will be independent of the order of these functors, which may be seen by from identifying $X$ with a product of copies of $\C$ and $\C^\times$ and, using the result of \cite{lyubashenko2001exterior}, identifying the category of perverse sheaves on the affine chart with the Deligne product of such categories on each factor. 

    Now a $G$-equivariant structure is a specific isomorphism of perverse sheaves on $G\times X$, which is a disjoint union of affine varieties. By essentially the same computation as in \autoref{Example_connected} above, for a $G$-equivariant structure on a perverse sheaf $F$ to exist it is necessary for the fundamental group of $G_0$, i.e. the automorphisms $t_i$ for $i>m$, to act trivially on it. Assuming it does indeed act trivially, an equivariant structure is determined by the isomorphisms $\theta_i: F\to m_{\xi_i}^* F$, where $\xi_i$ is viewed as an element of $T$ through the embedding $\mathbb{Z}/d_i\mathbb{Z}\subset T$.
    A repeated application of the formula $$\Psi_f(m_{\xi_i}^* F)\cong m_{\xi_i}^* \Psi_{f\circ m_{\xi_i}^{-1}}(F)$$ and its analogue for $\Phi_f$ essentially reduces the study of this equivariant structure to the one-dimensional cases described in \autoref{Example_discrete}, and we find that giving an equivariant structure is equivalent to giving a $d_i$-th root $s_i$ of each $t_i$ for $i\leqslant m$.

    Finally, for the $G$-equivariant structures on the restrictions of a perverse sheaf to affine charts to glue to a global $G$-equivariant structure we must only require for the chosen roots of $t_i$ to glue. All in all, giving a $G$-equivariant structure on a (possibly non-affine) $X_\Delta$ is equivalent to requiring that the automorphisms $t_i$ for $i>m$ act trivially on it and giving a $d_i$-th root for the action of each $t_i$ for $i\leqslant m$, and this is exactly the structure of a module over $A_G(\Delta).$
 
\end{proof}

In particular, this theorem describes the category of perverse sheaves on a toric DM stack in the sense of \cite{borisov2005toricDM}.

\begin{Remark}
    The Cox construction realizes a smooth toric variety $X$ as a quotient of a quasi-affine toric variety $U$ by a subgroup of the torus $G$. Then we have $$\Perv(X)\cong \Perv_G(U),$$ so the previous theorem allows us to recover the results of the previous section as long as  we first describe the category of perverse sheaves on the variety $U$. But $U$ is an open union of strata of ${\mathbb{C}}^n$, and we know what perverse sheaves look like on ${\mathbb{C}}^n$ in terms of diagrams of vector spaces indexed by the strata. To see that the perverse sheaves on $U$ simply correspond to a subdiagram of spaces indexed by the strata of $U$, one could either use a simplified version of the argument in the previous section or work this result out from the proof of, say, Maisonobe, Galligo and Granger \cite{galligo1985mathcal}.

\end{Remark}

\section{Beilinson's Theorem for realizable stratifications}

The category $\Perv(X)$ of perverse sheaves that are constructible with respect to some (not fixed a priori) stratification is defined as a heart of a $t$-structure on $D^{b}_{\mathrm{cons}}(X)$, the full subcategory of the derived category of sheaves $D^{b}(\mathrm{Sh}(X))$ with objects having constructible cohomology sheaves. This triangulated category admits what is called a filtered version \cite{beilinson2006derived}. There is a triangulated realization functor \[\mathrm{real:\ } D^b(\Perv(X)) \to D^{b}_{\mathrm{cons}}(X),\] which is $t$-exact with respect to the standard $t$-structure on the left and the perverse $t$-structure on the right, and
whose restriction to $\Perv(X)$ is the inclusion functor $\Perv(X)\hookrightarrow D^{b}_{\mathrm{cons}}(X)$ \cite{BBD1982,achar2021perverse}. A theorem of Beilinson states that the functor $\mathrm{real}$ is an equivalence \cite{beilinson2006derived}. For a fixed stratification $\Sigma$, however, the functor \[\mathrm{real:\ } D^b(\Perv_\Sigma(X) ) \to D^{b}_{\Sigma}(X)\] is rarely an equivalence.

Let $\mathrm{Loc}(X)$ and $\mathrm{Loc^{ft}}(X)$ denote the categories of all local systems and of local systems of finite type respectively and $D^b_{\mathrm{loc}}(X)$ and $D^b_{\mathrm{locf}}(X)$ the full subcategories of $D^{b}(\mathrm{Sh}(X))$ with objects with cohomology sheaves which are locally constant or locally constant of finite type respectively. 

Consider the case when $\Sigma$ is the trivial stratification, which consists of a single stratum $X$.  Both $\Perv_\Sigma(X)$ and $\Cons_\Sigma(X)$, the category of sheaves constructible with respect to $\Sigma$, coincide (up to a shift in the perverse case) with the category $\mathrm{Loc^{ft}}(X)$. Then a classic result states that the functor $\mathrm{real}: D^b(\mathrm{Loc}(X))\to D^b_{\mathrm{loc}}(X)$ is an equivalence if and only if the higher homotopy groups $\pi_n(X,*)$, $n\geqslant 2$, are trivial \cite{achar2021perverse}. So already the topology of $X$, or of the strata for a nontrivial stratification $\Sigma$, may prevent the realization functor from being an equivalence. There is also an essentially representation theoretic issue: restricting to local systems of finite type rarely preserves the higher $\Ext$-functors, and the realization functor $\mathrm{real}: D^b(\mathrm{Loc^{ft}}(X))\to D^b_{\mathrm{locf}}(X)$ is usually not an equivalence even for $X$ of type $K(\pi,1)$. The aim in this section is to establish a sufficient condition on a stratification $\Sigma$ for the realization functor to be an equivalence.

Let $\mathcal{T}$ be a triangulated category, and let $\mathcal{C}\subset \mathcal{T}$ be the
heart of a $t$-structure. Let $X, Y \in \mathcal{C}$ , and let $n > 0$. A morphism $f : X \to Y [n]$ is
said to be effaceable if there are morphisms $p : X^{'} \to X$ and $i : Y \to Y^{'}$ in $\mathcal{C}$ such
that $p$ is surjective, $i$ is injective, and $i[n]\circ f\circ p : X^{'} \to Y^{'}[n]$ is the zero morphism.

 The question of whether the realization functor $\mathrm{real}: \mathcal{C}\to\mathcal{T}$ is an equivalence reduces to the question of whether morphisms between shifts of objects of the heart $\mathcal{C}$ are effaceable as morphisms of $\mathcal{T}$ \cite{achar2021perverse}.

The key observation is the following classic result:

\begin{Proposition}
    
\label{theorem_effaceability_equiv}
Let $\mathcal{T}$ be a triangulated category that admits a filtered version, and let $\mathcal{C}$ be the heart of a bounded $t$-structure on $\mathcal{T}$. The following conditions are equivalent:
\begin{enumerate}
    \item The functor \[\mathrm{real:\ } D^b(\mathcal{C} ) \to \mathcal{T}\]is an equivalence of categories.
    \item For all $X, Y \in \mathcal{C}$ and all $n > 0$, every morphism $X \to Y [n]$ in $\mathcal{T}$ is
effaceable.    
\end{enumerate}
\end{Proposition}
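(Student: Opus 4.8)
The plan is to prove Proposition~\ref{theorem_effaceability_equiv} by the standard argument for showing a realization functor is an equivalence, using the effaceability hypothesis to verify that $\mathrm{real}$ is fully faithful on $\Ext$-groups and then invoking boundedness to conclude essential surjectivity. The functor $\mathrm{real}: D^b(\mathcal{C})\to\mathcal{T}$ is constructed (using the filtered version of $\mathcal{T}$) so that it is $t$-exact, restricts to the identity on $\mathcal{C}$, and induces on $\Hom$-groups maps
\[
\Ext^n_{\mathcal{C}}(X,Y)=\Hom_{D^b(\mathcal{C})}(X,Y[n])\longrightarrow \Hom_{\mathcal{T}}(X,Y[n])
\]
for $X,Y\in\mathcal{C}$ and all $n$. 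It is automatic that this map is an isomorphism for $n\le 0$ and an isomorphism for $n=1$ (both sides classify extensions in $\mathcal{C}$, which is the heart of a $t$-structure in each case). So the content is to show it is an isomorphism for all $n\ge 2$, and then bootstrap to arbitrary objects of $D^b(\mathcal{C})$ by dévissage along the cohomology filtration; since the $t$-structure is bounded, every object of $\mathcal{T}$ has finitely many nonzero cohomology objects, so a five-lemma induction on the length of this filtration reduces full faithfulness for arbitrary objects to full faithfulness between objects of the heart, and likewise reduces essential surjectivity to the fact that the cohomology objects lie in $\mathcal{C}$, hence in the image.

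\textbf{(1)$\Rightarrow$(2).} If $\mathrm{real}$ is an equivalence, then $\Hom_{\mathcal{T}}(X,Y[n])\cong\Ext^n_{\mathcal{C}}(X,Y)$, and it is a general homological fact that every element of $\Ext^n_{\mathcal{C}}(X,Y)$ for $n>0$ is effaceable in $D^b(\mathcal{C})$: represent the class by a Yoneda extension, or more cleanly, choose a surjection $p\colon X'\twoheadrightarrow X$ from a suitable object (e.g. exploiting that an $n$-extension can be spliced so that its pullback along an epimorphism killing the relevant $\Ext^1$-obstruction becomes trivial), and dually an injection $i\colon Y\hookrightarrow Y'$; the composite $i[n]\circ f\circ p$ then vanishes. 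Transporting this across the equivalence $\mathrm{real}$, which is compatible with the $t$-structures and hence sends monos/epis in $\mathcal{C}$ to monos/epis in $\mathcal{C}\subset\mathcal{T}$, gives effaceability in $\mathcal{T}$.

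\textbf{(2)$\Rightarrow$(1).} Assume every morphism $X\to Y[n]$ in $\mathcal{T}$ with $X,Y\in\mathcal{C}$ and $n>0$ is effaceable. I would first show surjectivity of $\Ext^n_{\mathcal{C}}(X,Y)\to\Hom_{\mathcal{T}}(X,Y[n])$ by induction on $n$: given $f\colon X\to Y[n]$, pick $p\colon X'\twoheadrightarrow X$ and $i\colon Y\hookrightarrow Y'$ in $\mathcal{C}$ with $i[n]fp=0$; complete $p$ and $i$ to short exact sequences $0\to K\to X'\to X\to 0$ and $0\to Y\to Y'\to C\to 0$ in $\mathcal{C}$, which are distinguished triangles in $\mathcal{T}$. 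The vanishing $i[n]fp=0$ lets one factor $f$ through the connecting maps: $f$ comes from a class in $\Hom_{\mathcal{T}}(K,Y[n-1])$ (push $f$ around the triangle built from $p$) and/or in $\Hom_{\mathcal{T}}(X,C[n-1])$, which by the inductive hypothesis lies in the image of $\Ext^{n-1}_{\mathcal{C}}$, and composing with the appropriate connecting morphism of the short exact sequence — which is realized by an honest $\Ext^1$-class in $\mathcal{C}$ — produces a preimage of $f$ in $\Ext^n_{\mathcal{C}}(X,Y)$ via the Yoneda product. Injectivity is handled by the same mechanism applied one degree lower: an element of $\Ext^n_{\mathcal{C}}(X,Y)$ mapping to $0$ in $\mathcal{T}$ is, after the same manipulations, seen to be a Yoneda product of a class that is already zero. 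This establishes that $\mathrm{real}$ is fully faithful on the heart, and then the dévissage described above upgrades this to full faithfulness on all of $D^b(\mathcal{C})$, while boundedness of the $t$-structure gives essential surjectivity, so $\mathrm{real}$ is an equivalence.

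\textbf{Main obstacle.} The delicate point is the induction step in (2)$\Rightarrow$(1): turning the \emph{existence} of effacements $p$ and $i$ into an actual factorization of $f$ through connecting homomorphisms of short exact sequences in $\mathcal{C}$, in a way that is compatible with the Yoneda product and with $\mathrm{real}$. One has to be careful that the octahedral-axiom manipulations in $\mathcal{T}$ used to produce the factorization land in morphisms that are genuinely realized by $\mathcal{C}$-extensions, rather than merely abstract morphisms of $\mathcal{T}$; this is exactly where the filtered version of $\mathcal{T}$ (used to build $\mathrm{real}$ in the first place) and the compatibility of $\mathrm{real}$ with the $t$-structures are needed. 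Since this is a known result (this is, e.g., the argument in \cite{beilinson2006derived} and \cite{achar2021perverse}), the proof in the paper will most likely just cite it, but the sketch above is how one would reconstruct it.
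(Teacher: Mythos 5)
The paper does not prove this proposition at all: it is labelled a ``classic result'' and is used on the authority of \cite{beilinson2006derived} and \cite{achar2021perverse} (the relevant statement is the effaceability criterion in the appendix of Achar's book), so there is no in-paper argument to compare yours against. Your sketch is a faithful reconstruction of that standard proof: (1)$\Rightarrow$(2) via the fact that in $D^b(\mathcal{C})$ any class in $\Hom(X,Y[n])$, $n>0$, is killed by pulling the representing Yoneda $n$-extension back along the epimorphism from its last term (this is exactly the left-effaceability Lemma A.5.17 the paper itself invokes later), and (2)$\Rightarrow$(1) by induction on $n$ using the triangles attached to the effacing epi and mono plus a five-lemma comparison of the long exact sequences in $D^b(\mathcal{C})$ and $\mathcal{T}$, followed by d\'evissage over the bounded cohomology filtration for full faithfulness and essential surjectivity. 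The one imprecision is in your induction step: from $i[n]\circ f\circ p=0$ you may only conclude that $i[n]\circ f$ factors through the connecting map of $0\to K\to X'\to X\to 0$, so the resulting class lives in $\Hom_{\mathcal{T}}(K,Y'[n-1])$, not $\Hom_{\mathcal{T}}(K,Y[n-1])$; recovering a statement about $Y$ itself is exactly what forces the second short exact sequence $0\to Y\to Y'\to C\to 0$ and the simultaneous injectivity/surjectivity diagram chase. You flagged this as the main obstacle rather than resolving it, which is the honest reading of where the real work in the cited proof lies; as a proposal it is the right route, matching the source the paper relies on.
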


The main idea of the proof of Beilinson's theorem is to find a good smooth open set $U$ in a variety $X$ that is, at least locally, a complement of a divisor of a regular function, and on which the functor \[D^b\mathrm{Loc^{ft}}(U) \to D^b_{\mathrm{locf}}(U)\] is an equivalence, and then using the open-complement exact triangles and Noetherian induction. The following definition basically specifies a class of stratifications for which the strata themselves may be used in place of $U$.

\begin{Definition}
    A stratification $\Sigma$ of an affine variety $X$ is realizable if
\begin{enumerate}
    \item $\Sigma$ is good,
    \item Each stratum $X_s$, $s\in \Sigma$, is smooth, affine and connected,
    \item On each stratum $X_s$, $s\in \Sigma$, the realization functor \[\mathrm{real:\ } D^b\mathrm{Loc^{ft}}(X_s) \to D^b_{\mathrm{locf}}(X_s)\] is an equivalence,
    \item If $Z$ and $W$ are closed unions of strata of $X$ such that $Z\subset W$ and $W\setminus Z$ consists of a single stratum, then $Z=f^{-1}(0)$ for some regular function $f: W\to \mathbb{A}^1$.
\end{enumerate}

    A stratification $\Sigma$ of a variety $X$ is realizable if $X$ can be covered by finitely many open sets $U_i$ such that each $U_i$ is an affine union of strata of $\Sigma$ and the stratification induced on $U_i$ by $\Sigma$ is realizable. 

\end{Definition}

 We remind the reader that a stratification $\Sigma$ is called good if for an inclusion of a stratum $j:X_s\to X$ the direct image $j_*L$ of a local system $L$ on $X_s$ is constructible with respect to $\Sigma$. This is a technical condition, satisfied for, for example, stratifications satisfying the Whitney conditions, and is necessary for the category $\Perv_\Sigma(X)$ to be well-defined.

In particular, the orbit stratification of a toric variety is realizable. Indeed, such a stratification is good, with each stratum isomorphic to some $(\mathbb{C}^\times)^m$, so satisfies the first two parts of the definition. The realization functor for local systems of finite type on $(\mathbb{C}^\times)^m$ is an equivalence, because such a space is of type $K(\pi,1)$ with a finitely generated free abelian fundamental group (for a more geometric argument, see Chapter 4.5 of \cite{achar2021perverse}). Finally, this stratification may also be obtained by taking the torus-invariant divisors $D_i$ and stratifying the variety in such a way that points in one stratum either all belong to some $D_i$, or none of them belong to said $D_i$. Then a key fact is that while not any torus-invariant divisor on an affine toric variety is set-theoretically a divisor of a regular function, the union of all torus invariant divisors is. From this description and this fact, the final part of the above definition follows easily.

It turns out that the proof of Beilinson's theorem essentially goes through for such stratifications. To see this, we need a fact from the theory vanishing and nearby cycles.

\begin{Proposition}
   Let $i: Z\hookrightarrow X$ be the inclusion of a closed union of strata $Z$, where $Z = f^{-1}(0)$ for some regular function $f : X \to \mathbb{A}^1$. Then the functor
$i_* : D^b\mathrm{Perv}_\Sigma(Z) \to D^b\mathrm{Perv}_\Sigma(X)$
is fully faithful. Its image consists of chain complexes $F\in D^b\mathrm{Perv}_\Sigma(X, k)$ such that
for all $k, H^k(F)$ is supported on $Z$.
\end{Proposition}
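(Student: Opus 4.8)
The plan is to reduce the statement to a comparison of $\Ext$-groups and then establish that comparison from Beilinson's gluing construction for $f$. Write $j\colon U=X\setminus Z\hookrightarrow X$ for the open complement. Since $Z=f^{-1}(0)$, the set $U$ is the complement of a principal divisor, hence affine, so $j$ is an affine morphism; consequently $j_!$ and $j_*$ are perverse $t$-exact and restrict to exact functors $j_!,j_*\colon\Perv_\Sigma(U)\to\Perv_\Sigma(X)$, left, resp.\ right, adjoint to the (always exact) restriction $j^*$. Similarly $i_*=i_!\colon\Perv_\Sigma(Z)\to\Perv_\Sigma(X)$ is exact because $i$ is a closed immersion, and $i^*i_*\cong\id$ shows $i_*$ is an equivalence of $\Perv_\Sigma(Z)$ onto the Serre subcategory $\Perv_{\Sigma,Z}(X)=\{F:j^*F=0\}$ of perverse sheaves supported on $Z$. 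Each of these exact functors induces a triangulated functor on bounded derived categories; an exact adjoint pair of abelian categories stays adjoint on the derived level, so $\mathbf j_!\dashv\mathbf j^*\dashv\mathbf j_*$ with $\mathbf j_!,\mathbf j_*$ fully faithful, and, since $j^*$ is exact, $\ker(\mathbf j^*\colon D^b\Perv_\Sigma(X)\to D^b\Perv_\Sigma(U))$ is exactly the full triangulated subcategory $D^b_Z\Perv_\Sigma(X)$ of complexes whose cohomology objects all lie in $\Perv_{\Sigma,Z}(X)$. The derived functor $i_*$ obviously lands in $D^b_Z\Perv_\Sigma(X)$.

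There are then two things to prove: (i) $i_*\colon D^b\Perv_\Sigma(Z)\to D^b\Perv_\Sigma(X)$ is fully faithful, and (ii) its essential image is all of $D^b_Z\Perv_\Sigma(X)$. Granting (i), assertion (ii) follows by a routine dévissage on the amplitude: any complex in $D^b_Z\Perv_\Sigma(X)$ has finitely many nonzero cohomology objects, each in $\Perv_{\Sigma,Z}(X)=\operatorname{im}(i_*)$, and it is reconstructed from these by the truncation triangles, the connecting morphisms being lifted using (i). Full faithfulness (i) reduces, again by truncating source and target and inducting on amplitude, to the claim that the natural map $\Ext^n_{\Perv_\Sigma(Z)}(B,B')\to\Ext^n_{\Perv_\Sigma(X)}(i_*B,i_*B')$ is an isomorphism for all $B,B'\in\Perv_\Sigma(Z)$ and all $n\ge 0$. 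For $n=0$ this is the abelian full faithfulness, and for $n=1$ it holds because $\Perv_{\Sigma,Z}(X)$ is closed under extensions in $\Perv_\Sigma(X)$.

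The remaining case $n\ge 2$ is where the hypothesis $Z=f^{-1}(0)$ is really used, through Beilinson's gluing: one has the exact unipotent nearby-cycle functor $\Psi^{\mathrm{un}}_f\colon\Perv_\Sigma(U)\to\Perv_\Sigma(Z)$ and the exact maximal-extension functor $\Xi_f\colon\Perv_\Sigma(U)\to\Perv_\Sigma(X)$, fitting into functorial short exact sequences $0\to j_!L\to\Xi_fL\to i_*\Psi^{\mathrm{un}}_fL\to 0$ and $0\to i_*\Psi^{\mathrm{un}}_fL\to\Xi_fL\to j_*L\to 0$, together with Beilinson's identification of $\Perv_\Sigma(X)$ with the category of gluing data $(j^*F,\Phi^{\mathrm{un}}_fF,\mathrm{can},\mathrm{var})$. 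From the adjunctions of exact functors one gets $\Ext^n_{\Perv_\Sigma(X)}(j_!L,G)\cong\Ext^n_{\Perv_\Sigma(U)}(L,j^*G)$ and $\Ext^n_{\Perv_\Sigma(X)}(G,j_*L)\cong\Ext^n_{\Perv_\Sigma(U)}(j^*G,L)$ for all $n$; in particular all $\Ext^\bullet_{\Perv_\Sigma(X)}(j_!L,i_*B)$ and $\Ext^\bullet_{\Perv_\Sigma(X)}(i_*B,j_*L)$ vanish, whence the two fundamental exact sequences give $\Ext^\bullet_{\Perv_\Sigma(X)}(i_*\Psi^{\mathrm{un}}_fL,i_*B')\cong\Ext^\bullet_{\Perv_\Sigma(X)}(\Xi_fL,i_*B')$ and $\Ext^\bullet_{\Perv_\Sigma(X)}(i_*B,i_*\Psi^{\mathrm{un}}_fL)\cong\Ext^\bullet_{\Perv_\Sigma(X)}(i_*B,\Xi_fL)$. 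Using these relations one runs Beilinson's gluing argument one categorical level up: inside a Yoneda $n$-extension between objects supported on $Z$, the intermediate terms that are not supported on $Z$ can be replaced, exhibiting the class as equivalent to one with all terms in $\Perv_{\Sigma,Z}(X)$. Equivalently, the six functors $\mathbf j_!,\mathbf j^*,\mathbf j_*,\mathbf i^*,\mathbf i_*,\mathbf i^!$ (with $\mathbf i^*,\mathbf i^!$ defined through the recollement triangles) form a recollement of $D^b\Perv_\Sigma(X)$ by $D^b\Perv_\Sigma(U)$ and $D^b\Perv_\Sigma(Z)$, and (i)--(ii) are exactly the parts of that recollement concerning $\mathbf i_*$. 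I expect the main obstacle to be precisely this last step -- upgrading Beilinson's equivalence of \emph{abelian} categories to the comparison of the higher ($n\ge 2$) $\Ext$-groups; everything in low degrees, and everything that is a formal consequence of the affineness of $j$, is routine.
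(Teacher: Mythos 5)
Your set-up is sound: reducing full faithfulness plus the image statement to the comparison of Yoneda Ext-groups $\Ext^n_{\Perv_\Sigma(Z)}(B,B')\to\Ext^n_{\Perv_\Sigma(X)}(i_*B,i_*B')$, the derived adjunctions for the exact functors $j_!,j^*,j_*$ (using only that $j$ is an \emph{affine morphism} — note $U$ itself need not be affine when $X$ is not), the resulting vanishing of $\Ext^\bullet(j_!L,i_*B)$ and $\Ext^\bullet(i_*B,j_*L)$, and the two fundamental exact sequences for the maximal extension functor are exactly the right toolkit. But the decisive step — getting the isomorphism for $n\ge 2$ from these ingredients — is not carried out: "run Beilinson's gluing argument one categorical level up" and "the intermediate terms that are not supported on $Z$ can be replaced" is a description of what needs to happen, not an argument, and you flag it yourself as the main obstacle. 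That step \emph{is} the proposition; everything before it is formal. So as a self-contained proof the proposal has a genuine gap at its core. For comparison, the paper does not prove this step either: its proof consists of citing the classical result (Proposition 4.5.7 of \cite{achar2021perverse}), and the only content it adds is the check that the argument survives fixing the stratification, namely that the maximal extension functor $\Theta_f$ (your $\Xi_f$) restricts to a functor $\Perv_\Sigma(X\setminus Z)\to\Perv_\Sigma(X)$.

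That last point is also the one thing your write-up passes over silently: you simply decorate $\Psi^{\mathrm{un}}_f$ and $\Xi_f$ with the subscript $\Sigma$, whereas preservation of $\Sigma$-constructibility by these functors is precisely the non-automatic ingredient in the fixed-stratification setting (it holds because they are built from $j_!$, $j_*$ applied to twists of $\Sigma$-constructible objects by pullbacks along $f$ of unipotent local systems, and $\Sigma$ is a good stratification — but it deserves a sentence). In short: either cite the classical theorem, in which case the bulk of your text is unnecessary and only the $\Sigma$-constructibility remark is needed, or prove it, in which case the $n\ge2$ comparison must actually be executed rather than sketched.
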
\label{Prop_supp_Z}
\begin{proof}
    This is a classical result. A possible reference is Proposition 4.5.7 of \cite{achar2021perverse}. To see that nothing breaks down when the stratification $\Sigma$ is fixed, one should make sure that the maximal extension functor $\Theta_f: \mathrm{Perv}(X\setminus Z)\to\mathrm{Perv}(X)$ restricts to a functor $\Theta_f: \mathrm{Perv}_\Sigma(X\setminus Z)\to\mathrm{Perv}_\Sigma(X)$, which is straightforward.
\end{proof}

The following proof closely follows the exposition of the proof of Beilinson's theorem in \cite{achar2021perverse}.
\begin{Theorem}
   Let $X$ be a variety with a realizable stratification $\Sigma$. Then the realization functor
\[\mathrm{real:\ } D^b\mathrm{Perv}_\Sigma(X) \to D^b_\Sigma(X)\]
is an equivalence of categories.
\end{Theorem}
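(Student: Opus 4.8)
The strategy is the standard one for Beilinson-type theorems: reduce to checking effaceability of morphisms $X \to Y[n]$ in $D^b_\Sigma(X)$ for $X, Y \in \Perv_\Sigma(X)$ and all $n > 0$ (this is the content of \autoref{theorem_effaceability_equiv}), and then run a Noetherian induction on the closed unions of strata, using the hypotheses built into the definition of a realizable stratification. Since realizability is a local condition and effaceability can be checked locally (one may shrink to a cover by affine opens $U_i$ on which $\Sigma$ restricts to a realizable stratification, and a morphism is effaceable iff it is effaceable after restriction to each $U_i$ because the effacing sub/quotient objects can be patched — or more simply, because $\mathrm{real}$ is an equivalence iff it is so locally), I would first reduce to the case that $X$ is affine and the induced stratification satisfies the four conditions in the first part of the definition.

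\emph{Base of the induction.} If $X$ itself is a single stratum, then $\Perv_\Sigma(X)$ and $D^b_\Sigma(X)$ are (up to shift) $\mathrm{Loc^{ft}}(X)$ and $D^b_{\mathrm{locf}}(X)$, and condition (3) of realizability says exactly that $\mathrm{real}$ is an equivalence there. In general, pick a stratum $X_s$ that is open in $X$ (a maximal stratum), let $j: X_s \hookrightarrow X$ be the open inclusion and $i: Z \hookrightarrow X$ the inclusion of its closed complement $Z = X \setminus X_s$, which is a closed union of strata; by condition (4), $Z = f^{-1}(0)$ for a regular function $f: X \to \mathbb{A}^1$. By induction we may assume the theorem holds for $Z$ with its induced (still realizable) stratification.

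\emph{Inductive step.} Given $F, G \in \Perv_\Sigma(X)$ and a morphism $\alpha: F \to G[n]$ in $D^b_\Sigma(X)$ with $n > 0$, I want to efface it. Use the maximal extension functor $\Theta_f$ and the recollement $(i_*, j_!, j_*)$ attached to the function $f$, exactly as in the proof of Beilinson's theorem in \cite{achar2021perverse}: one first effaces along $Z$ using \autoref{Prop_supp_Z} (which reduces morphisms whose "obstruction is supported on $Z$" to the already-solved case of $D^b \Perv_\Sigma(Z)$), and then effaces the remaining part using the equivalence $D^b\mathrm{Loc^{ft}}(X_s) \to D^b_{\mathrm{locf}}(X_s)$ on the open stratum together with the gluing functors $j_!$, $j_*$, $\Theta_f$, which by the remark after \autoref{Prop_supp_Z} all preserve $\Sigma$-constructibility. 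Concretely, one writes $\alpha|_{X_s}$ as effaceable on $X_s$ by some $p': F' \twoheadrightarrow F|_{X_s}$ and $\iota: G|_{X_s} \hookrightarrow G'$, extends $p'$ and $\iota$ across $X$ using the gluing data (the maximal extension provides the universal such extension staying inside $\Perv_\Sigma$), and checks that the discrepancy between $\alpha$ and the effaced morphism is a morphism factoring through an object supported on $Z$, hence effaceable by the inductive hypothesis via $i_*$. Assembling these two effacements gives effaceability of $\alpha$, and by \autoref{theorem_effaceability_equiv} the realization functor is an equivalence.

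\emph{Main obstacle.} The delicate point is not the homological bookkeeping but making sure that \emph{every} functor used in the gluing argument — $j_!$, $j_*$, the vanishing/nearby cycle functors $\Psi_f, \Phi_f$, and the maximal extension $\Theta_f$ — preserves the fixed-stratification subcategories $\Perv_\Sigma$ and $D^b_\Sigma$, so that the whole argument of \cite{achar2021perverse} stays inside the $\Sigma$-constructible world; this is where conditions (1) (goodness) and (4) (the closed stratum is cut out by a regular function, so that $\Psi_f, \Phi_f$ are even available) of realizability are essential, and it is exactly the content of the parenthetical remark in the proof of \autoref{Prop_supp_Z}. One must also verify that the base cases genuinely cover the toric application, but that is handled by the discussion preceding the theorem. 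Beyond these compatibility checks, the proof is a verbatim transcription of the Noetherian induction for Beilinson's theorem, so I would present it as such, pausing only to insert the $\Sigma$-constructibility verifications at each use of a gluing functor.
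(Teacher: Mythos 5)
Your overall strategy is the one used here: reduce to effaceability via \autoref{theorem_effaceability_equiv} and run the Noetherian induction from the proof of Beilinson's theorem in \cite{achar2021perverse}, inserting the conditions of realizability at the appropriate places. However, the two reductions that carry all the weight are asserted rather than proved, and the justifications you offer for them would not hold up. First, the local-to-global step: you claim a morphism $\varphi\colon F\to G[n]$ is effaceable once it is effaceable on each member of an affine cover, "because the effacing sub/quotient objects can be patched", or "because $\mathrm{real}$ is an equivalence iff it is so locally". Neither is available: surjections $P_i\twoheadrightarrow j_i^*F$ of perverse sheaves on the $U_i$ do not glue, and the locality of "$\mathrm{real}$ is an equivalence" is precisely what needs proof. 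The correct argument (Step 4 of the proof given here) is short but genuine: one only needs left-effaceability in $D^b(\Perv_\Sigma(U_i))$ (Lemma A.5.17 of \cite{achar2021perverse}), then one extends by zero, using that $j_{i!}P_i$ is perverse because $j_i$ is an affine open immersion, obtains by adjunction maps $\bar p_i\colon j_{i!}P_i\to F$ with $\varphi\circ\bar p_i=0$, and checks that the sum $\bigoplus_i j_{i!}P_i\to F$ is surjective because its cokernel vanishes on every $U_i$. Your proposal is missing this mechanism.

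Second, the affine inductive step is sketched as: efface $\alpha|_{X_s}$ on the open stratum, extend the effacing data over $X$ by the maximal extension $\Theta_f$, and treat the "discrepancy supported on $Z$" by induction. As written this leaves out exactly the nontrivial content: it is not explained how a surjection onto $F|_{X_s}$ produces a surjection onto $F$ inside $\Perv_\Sigma(X)$, nor in what sense the discrepancy is a morphism to which the inductive hypothesis and \autoref{theorem_effaceability_equiv} apply. The proof here avoids $\Theta_f$ in the main argument (it only enters through the cited full faithfulness of $i_*$ for $Z=f^{-1}(0)$) and instead proves directly that $\mathrm{real}$ induces isomorphisms on all $\Ext$ groups: Step 1 handles $\mathrm{supp}\,F\cup\mathrm{supp}\,G$ proper via $i_*$; Steps 2 and 3 reduce to simple objects and use the adjunction epimorphism $j_!j^*F\to F$ in one case and the monomorphism $G\to j_*j^*G$ in the other (two genuinely different cases, the first requiring $\mathrm{supp}\,G$ proper), together with the vanishing $\Ext^m(j_!j^*F,G)\cong\Ext^m(j^*F,j^*G)=0$, condition (3) of realizability on $U$, and the five lemma. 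Your plan can almost certainly be completed along these lines, but in its present form both the globalization and the open/closed d\'evissage are gaps, not merely compatibility checks.
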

\begin{proof}
    By \autoref{theorem_effaceability_equiv}, it is enough to show that for any two perverse sheaves $F, G \in \mathrm{Perv}_\Sigma(X, k)$ and any $k \geqslant 0$, the map
\[\mathrm{real:\ } \mathrm{Ext}^k_{\mathrm{Perv}_\Sigma(X)}(\F, \G) \to \mathrm{Hom}_{D^b_\Sigma(X)}(F, G[k])\] induced by $\mathrm{real}$ is an isomorphism. We proceed by noetherian induction.

\emph{Step 1. Assume $X$ is affine and $\mathrm{supp\ }\F\cup \mathrm{supp\ }\G$ is a proper closed subvariety of $X$.}

Since $\Sigma$ is realizable, $\mathrm{supp\ }\F\cup \mathrm{supp\ }\G$ is contained in some $Z=f^{-1}(0)$ for some regular function $f: X\to \mathbb{A}^1$. Let $i:Z\to X$ be the inclusion map. Since $\F$ and $\G$ are supported on $Z$, we have $\F\cong i_*\F^{'}$, $\G\cong i_*\G^{'}$ for some $\F^{'},\G^{'}\in\Perv_\Sigma(Z).$ Now consider the following diagram:

\[\begin{tikzcd}
\Ext_{\Perv_\Sigma(Z)}^k(\F^{'},\G^{'}) \arrow{r}{\mathrm{real}} \arrow[swap]{d}{i_*} & \Hom_{D^b_\Sigma(Z)} (\F^{'},\G^{'}[k])\arrow{d}{i_*} \\
\Ext_{\Perv_\Sigma(X)}^k(i_*\F^{'},i_*\G^{'}) \arrow{r}{\mathrm{real}} & \Hom_{D^b_\Sigma(X)} (i_*\F^{'},i_*\G^{'}[k])
\end{tikzcd}
\]

The top arrow is an isomorphism by induction, the left arrow is an isomorphism by Proposition \ref{Prop_supp_Z}, while the right arrow is an isomorphism because the functor $i_*$ is fully faithful. The square commutes, and hence the bottom arrow is an isomorphism.

\emph{Step 2. Assume $X$ is affine and assume $\mathrm{supp\ }\G$ is a proper closed subvariety of $X$.} 

It is easy to see by induction on the number of composition factors that it is enough to cover the case when $\F$ is simple. If $\mathrm{supp\ }\F\cup \mathrm{supp\ }\G$ is a proper closed subvariety, we are done by Step 1. Otherwise, $\mathrm{supp\ }\F$ contains an open stratum $U$ that does not meet $\mathrm{supp\ }\G$, and by definition of a realizable stratification, it is smooth and affine.  Let $j: U\hookrightarrow X$ be the inclusion map. We have that $j^*\G=0$ and $j^*\F$ a shifted local system. Then $j_!j^*\F$ is a perverse sheaf, since the morphism $j$ is affine. The adjunction map $j_!j^*\F\to \F$ is nonzero, and since $\F$ is simple, it must be an epimorphism. Let $K$ be its kernel. By construction, $K$ is supported on the complement of $U$, so  $\mathrm{supp\ }\K\cup \mathrm{supp\ }\G$ is a proper closed subset. Consider the commutative diagram 
\[
\adjustbox{scale=0.95,center}{
\begin{tikzcd}
\Ext^{k-1}(j_!j^*\F,\G) \arrow{r} \arrow[swap]{d} & \Ext^{k-1}(K,\G) \arrow{r} \arrow[swap]{d}&\Ext^k(\F,\G) \arrow{r} \arrow[swap]{d}&\Ext^k(j_!j^*\F,\G) \arrow[swap]{d}\\ \Hom(j_!j^*\F,\G[k-1]) \arrow{r} & \Hom(K,\G[k-1]) \arrow{r} &\Hom(\F,\G[k]) \arrow{r} &\Hom(j_!j^*\F,\G[k])
\end{tikzcd}
}
\]
where all the $\Ext$ functors are taken in $\Perv_\Sigma(X)$ and all the $\Hom$ functors are taken in $D^b_\Sigma(X)$. Since $j_!$ and $j^*$ are a pair of adjoint exact functors, we have 

\[\Ext^{m}(j_!j^*\F,\G)\cong \Ext^{m}(j^*\F,j^*\G)=0\] for any $m$, so the first and last terms in the first row vanish. A similar argument in the derived category of sheaves shows that the first and last terms in the second row also vanish. The arrow in the second column is an isomorphism by Step 1, and hence so is the arrow in the third column.

\emph{Step 3. Conclusion for affine $X$ and arbitrary $\F,\G$.}

Again, we may assume that $\G$ is simple. There is an open stratum $U$ with the embedding $j: U\to X$ such that both $j^*\F,j^*\G$ are shifted local systems. Since $j$ is affine and $\Sigma$ is a good stratification, $j_*j^*\G$ is a perverse sheaf, and the adjunction map $\G\to j_*j^*\G$ is nonzero. Since $\G$ is simple, it must be a monomorphism. Let $K$ be its cokernel, and consider the diagram\[
\adjustbox{scale=0.8,center}{
\begin{tikzcd}
\Ext^{k-1}(\F,j_*j^*\G) \arrow{r} \arrow[swap]{d} & \Ext^{k-1}(\F,K) \arrow{r} \arrow[swap]{d}&\Ext^k(\F,\G) \arrow{r} \arrow[swap]{d}&\Ext^k(\F,j_*j^*\G) \arrow[swap]{d} \arrow{r} & \Ext^{k}(\F,K) \arrow[swap]{d} \\ 
\Hom(\F,j_*j^*\G[k-1]) \arrow{r} & \Hom(\F,K[k-1]) \arrow{r} &\Hom(\F,\G[k]) \arrow{r} &\Hom(F,j_*j^*\G[k])\arrow{r} & \Hom(\F,K[k])
\end{tikzcd}
}
\]

Since $K$ is supported on the complement of $U$, the second and fifth vertical maps are isomorphisms by Step 2.
For the first and fourth vertical maps, consider the commutative diagram

\[\begin{tikzcd}
\Ext_{\Perv_\Sigma(U)}^k(j^*\F,j^*\G) \arrow{r} \arrow[swap]{d} & \Ext_{\Perv_\Sigma(X)}^k(\F,j_*j^*\G)\arrow{d}{i_*} \\ 
\Hom_{D^b_\Sigma(U)} (j^*\F,j^*\G[k])\arrow{r} & \Hom_{D^b_\Sigma(X)} (\F,j_*j^*\G[k])
\end{tikzcd}
\]

The horizontal arrows are isomorphisms coming from the adjunction of the exact functors $j_!$ and $j^*$. As for the left arrow, we have that $\Perv_\Sigma(U)$ is just $\mathrm{Loc^{ft}}(U)$ up to a shift, and similarly $D^b_\Sigma(U)$ is just $D^b_{\mathrm{locf}}(U)$, so the arrow is an isomorphism by property 3 of a realizable stratification. By commutativity, the right arrow is also an isomorphism. By the five lemma, the middle arrow in the large diagram is an isomorphism, which is exactly what we wanted to prove.

\emph{Step 4. Conclusion for arbitrary $X$ and arbitrary $\F,\G$.}
Let's use the following fact: if $\mathcal{T}=D^b(\mathcal{A})$ is a derived category of an abelian category $\mathcal{A}$, then for $n>0$ every morphism $f: X\to Y[n]$, $X,Y\in \mathcal{A}$ is left-effaceable, i.e. there exists an epimorphism $p: P\to X$ such that $f\circ p=0$. (See Lemma A.5.17 of \cite{achar2021perverse}).

The goal is to show that every morphsim $\varphi\in \Hom_{D^b_\Sigma(X)}(F,G[n])$ between perverse sheaves is effaceable for $n>0$. Let $U_1,\ldots, U_k$ be a finite affine open cover of $X$ by unions of strata with a realizable stratification. let $j_i: U_i\to X$ be the embeddings. Above we have shown that the categories $D^b_\Sigma(U_i)$ are equivalent to the derived categories $D^b(\Perv_\Sigma(U_i))$, so there are surjective maps $p_i: P_i\to j_i^*F$ such that $j_i^* \varphi\circ p_i=0.$ By adjunction, we get the maps $\bar{p}_i: j_{i!} P_i\to F$. Note that since $j_i$ is affine, $j_{i!}P_i$ is a perverse sheaf. The commutative diagram

\[\begin{tikzcd}
	{j_{i!}\mathcal{P}_i} &&& {j_{i!}j_i^*F} &&& {F} \\
	\\
	&&& {j_{i!}j_i^*G[k]} &&& {G[k]}
	\arrow["{j_{i!}(p_i)}", from=1-1, to=1-4]
	\arrow[from=1-4, to=1-7]
	\arrow["{\overline{p}_i}", curve={height=-24pt}, from=1-1, to=1-7]
	\arrow["{j_{i!}(j_i^*(\varphi)\circ p_i)=0}"'{pos=0.3}, curve={height=12pt}, from=1-1, to=3-4]
	\arrow["{j_{i!}j_i^*\varphi}"', from=1-4, to=3-4]
	\arrow[from=3-4, to=3-7]
	\arrow["\varphi", from=1-7, to=3-7]
\end{tikzcd}\]
shows that $\varphi\circ\overline{p}_i=0$. Summing over $i$, we obtain a map 

\[\overline{p}:\bigoplus_{i=1}^nj_{i!}\mathcal{P}_i\to F\]
such that $\varphi\circ\overline{p}=0$. The restriction $j^*_i\overline{p}$ is surjective for any $i$, so the cokernel of $\overline{p}$ vanishes when restricted to any $U_i$, and hence is simply zero. Therefore, $\overline{p}$ is surjective and effaces $\varphi$. 
\end{proof}

As an application, we get the following corollary for normal, not necessarily smooth, toric varieties

\begin{Corollary}
    Let $X$ be a toric variety with the orbit stratification $\Sigma$. Then
\[\mathrm{real:\ } D^b\mathrm{Perv}_\Sigma(X) \to D^b_\Sigma(X)\]
is an equivalence of categories.
\end{Corollary}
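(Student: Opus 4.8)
The plan is to deduce this from the preceding theorem by checking that the orbit stratification $\Sigma$ of a normal toric variety $X$ is realizable. Since realizability for a general variety is a local condition — a finite cover by affine opens that are unions of strata, on each of which $\Sigma$ restricts to a realizable stratification — and since the affine toric charts $X_\sigma\subseteq X$ are affine opens which are unions of $T$-orbits (hence of strata), I would reduce to verifying the four conditions of the affine definition for $\Sigma$ on an affine toric variety. One should observe, so that the noetherian induction inside the proof of the theorem runs, that the same verification is needed not only for affine toric varieties but for reduced unions of orbit closures sitting inside them (these are the closed unions of strata that arise); the argument below is insensitive to this.

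Conditions (1)--(3) are quick. The orbit decomposition of a toric variety is a Whitney stratification — an algebraic group action with finitely many orbits — so $\Sigma$ is good, giving (1); each orbit is isomorphic to a torus $(\mathbb{C}^\times)^m$, which is smooth, affine and connected, giving (2); and on $(\mathbb{C}^\times)^m$, which is a $K(\pi,1)$ with $\pi\cong\mathbb{Z}^m$ finitely generated free abelian, the realization functor $D^b\mathrm{Loc^{ft}}\to D^b_{\mathrm{locf}}$ is an equivalence by the standard computation of $\Ext$'s in the category of $k[\mathbb{Z}^m]$-modules (Chapter 4.5 of \cite{achar2021perverse}), giving (3).

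The heart of the matter is condition (4): given closed unions of strata $Z\subseteq W$ inside a fixed affine chart with $W\setminus Z$ a single stratum $O_\tau$, one must exhibit $Z$ as the zero set of a regular function on $W$. Since $O_\tau$ is open in $W$, it is the open orbit of exactly one irreducible component $\overline{O_\tau}$ of $W$, and every other component, as well as $\overline{O_\tau}\setminus O_\tau$, is contained in $Z=W\setminus O_\tau$. On the affine toric variety $\overline{O_\tau}$, with big torus $O_\tau$, the complement $\overline{O_\tau}\setminus O_\tau$ is the union of the torus-invariant divisors, and this is the zero set of a single regular function: a character $\chi^m$ with $m$ in the relative interior of the dual cone pairs strictly positively with every relevant ray generator, so it vanishes exactly on those divisors and is invertible on $O_\tau$. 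On the remaining components of $W$ take the zero function; these glue to a regular function $f$ on $W$ because each pairwise intersection of components is a proper closed toric subvariety of $\overline{O_\tau}$, contained in its invariant divisors, where $\chi^m$ already vanishes. By construction $f^{-1}(0)=Z$, so (4) holds, $\Sigma$ is realizable, and the theorem gives the equivalence; note that smoothness of $X$ was never used.

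I expect condition (4) to be the only real obstacle. It requires translating the abstract "cut out by a regular function" hypothesis into the concrete toric fact that the complement of the big torus in an affine toric variety is principal set-theoretically, and then dealing with the mildly annoying reducible case of a union of several orbit closures by the gluing argument above. The remaining conditions, and the passage from realizability to the equivalence itself, are either standard facts about Whitney stratifications and $K(\pi,1)$ spaces or are already packaged into the preceding theorem.
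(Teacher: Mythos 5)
Your proposal is correct and follows essentially the paper's own route: the corollary is deduced by verifying that the orbit stratification is realizable (goodness, strata isomorphic to tori, the $K(\pi,1)$ argument for condition (3)) and then applying the preceding theorem, with condition (4) resting on the toric fact that the complement of the open orbit in an affine toric variety is set-theoretically the zero locus of a single regular function. Your write-up of condition (4) is actually more detailed than the paper's sketch; the only step phrased loosely is gluing $\chi^m$ on $\overline{O_\tau}$ with the zero function on the other components (agreement on the set-theoretic intersection does not in general suffice to glue regular functions), but this is harmless here because a lattice point $m\in\mathrm{relint}(\sigma^\vee\cap\tau^\perp)$ gives a character $\chi^m$ that is already regular on the whole affine chart, is nonvanishing exactly on the orbits $O_{\tau'}$ with $\tau'\prec\tau$, and hence restricts to a regular function on $W$ whose zero set is exactly $Z$, so no gluing is needed (alternatively, one may replace $\chi^m$ by a sufficiently high power to land in the scheme-theoretic intersection ideal).
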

    
\begingroup
\let\itshape\upshape
\bibliography{ref.bib}

\begin{thebibliography}{10}

\bibitem{achar2021perverse}
P.~N. Achar.
\newblock {\em Perverse sheaves and applications to representation theory},
  volume 258.
\newblock American Mathematical Soc., 2021.

\bibitem{BBD1982}
A.~Beilinson, J.~Bernstein, P.~Deligne, and O.~Gabber.
\newblock {\em Faisceaux pervers}, volume~4.
\newblock Soci{\'e}t{\'e} math{\'e}matique de France Paris, 2018.

\bibitem{Beilinson1987glue}
A.~A. Beilinson.
\newblock How to glue perverse sheaves.
\newblock {{\(K\)}}-theory, arithmetic and geometry, {Semin}., {Moscow} {Univ}.
  1984-86, {Lect}. {Notes} {Math}. 1289, 42-51 (1987)., 1987.

\bibitem{beilinson2006derived}
A.~A. Beilinson.
\newblock On the derived category of perverse sheaves.
\newblock In {\em K-Theory, Arithmetic and Geometry: Seminar, Moscow
  University, 1984--1986}, pages 27--41. Springer, 2006.

\bibitem{bondal_logvinenko}
A.~Bondal and T.~Logvinenko.
\newblock Perverse schobers and orbifolds.
\newblock Preprint.

\bibitem{borisov2005toricDM}
L.~A. Borisov, L.~Chen, and G.~G. Smith.
\newblock The orbifold {Chow} ring of toric {Deligne}-{Mumford} stacks.
\newblock {\em J. Am. Math. Soc.}, 18(1):193--215, 2005.

\bibitem{braden2002grassmanian}
T.~Braden.
\newblock Perverse sheaves on {Grassmannians}.
\newblock {\em Can. J. Math.}, 54(3):493--532, 2002.

\bibitem{braden1999rank}
T.~Braden and M.~Grinberg.
\newblock Perverse sheaves on rank stratifications.
\newblock {\em Duke Math. J.}, 96(2):317--362, 1999.

\bibitem{Dupont2010thesis}
D.~Dupont.
\newblock {\em Exemples de classification de champs de faisceaux pervers}.
\newblock PhD thesis, 2008.
\newblock Thèse de doctorat dirigée par Maisonobe, Philippe Mathématiques,
  Université Nice.

\bibitem{franco2013tensor}
I.~L. Franco.
\newblock Tensor products of finitely cocomplete and abelian categories.
\newblock {\em Journal of Algebra}, 396:207--219, 2013.

\bibitem{galligo1985mathcal}
A.~Galligo, M.~Granger, and P.~Maisonobe.
\newblock $\mathcal{D}$-modules et faisceaux pervers dont le support singulier
  est un croisement normal.
\newblock In {\em Annales de l'institut Fourier}, volume~35, pages 1--48, 1985.

\bibitem{gammage2023homological}
B.~Gammage, M.~McBreen, and B.~Webster.
\newblock Homological mirror symmetry for hypertoric varieties ii, 2023.
\newblock arXiv:1903.07928.

\bibitem{gelfand2005microlocal}
S.~Gelfand, R.~MacPherson, and K.~Vilonen.
\newblock Microlocal perverse sheaves, 2005.
\newblock arXiv:math/0509440.

\bibitem{janelidze1994facets}
G.~Janelidze and W.~Tholen.
\newblock Facets of descent. {I}.
\newblock {\em Appl. Categ. Struct.}, 2(3):245--281, 1994.

\bibitem{kahn2024benabouroubaud}
B.~Kahn.
\newblock On the {B\'enabou-Roubaud} theorem, 2024.
\newblock arXiv:2404.00868.

\bibitem{Kapranov2016hyperplane}
M.~Kapranov and V.~Schechtman.
\newblock Perverse sheaves over real hyperplane arrangements.
\newblock {\em Ann. Math. (2)}, 183(2):619--679, 2016.

\bibitem{kashiwara2002sheaves}
M.~Kashiwara and P.~Schapira.
\newblock {\em Sheaves on Manifolds: With a Short History. {\guillemotleft}Les
  d{\'e}buts de la th{\'e}orie des faisceaux{\guillemotright}. By Christian
  Houzel}.
\newblock Grundlehren der mathematischen Wissenschaften. Springer Berlin
  Heidelberg, 2002.

\bibitem{lyubashenko2001exterior}
V.~Lyubashenko.
\newblock Exterior tensor product of perverse sheaves.
\newblock {\em Ukrainian Mathematical Journal}, 53(3):354--367, 2001.

\bibitem{MacPherson1986inventiones}
R.~MacPherson and K.~Vilonen.
\newblock Elementary construction of perverse sheaves.
\newblock {\em Invent. Math.}, 84:403--435, 1986.

\bibitem{verdier1985extension}
J.-L. Verdier.
\newblock Extension of a perverse sheaf over a closed subspace. {Prolongement}
  des faisceaux pervers monodromiques.
\newblock Syst{\`e}mes diff{\'e}rentiels et singularit{\'e}s, {Colloq}.
  {Luminy}/{France} 1983, {Ast{\'e}risque} 130, 210-217; 218-236 (1985)., 1985.

\end{thebibliography}
\bibliographystyle{abbrv}
\endgroup
\end{document}